\newcommand{\newsection}[1]{\setcounter{equation}{0} \section{#1}}
\numberwithin{equation}{section}
\newtheorem{propn}{Proposition}[section]
\newtheorem{thm}[propn]{Theorem}
\newtheorem{lemma}[propn]{Lemma}
\newtheorem{cor}[propn]{Corollary}
\newtheorem*{thm*}{Theorem}
\theoremstyle{definition}
 \newcommand{\D}{\mathbb{D}}
 \newcommand{\ot}{\otimes}
\newcommand{\clb}{\mathcal{B}}
\newcommand{\cle}{\mathcal{E}}
\newcommand{\clf}{\mathcal{F}}
\newcommand{\clg}{\mathcal{G}}
\newcommand{\clh}{\mathcal{H}}
\newcommand{\clk}{\mathcal{K}}
\newcommand{\cll}{\mathcal{L}}
\newcommand{\clm}{\mathcal{M}}
\newcommand{\clo}{\mathcal{O}}
\newcommand{\clq}{\mathcal{Q}}
\newcommand{\cls}{\mathcal{S}}
\newcommand{\clw}{\mathcal{W}}
\newcommand{\Z}{\mathbb{Z}_+}
\newcommand{\C}{\mathbb{C}}
\newcommand{\z}{\bm{z}}
\newcommand{\w}{\bm{w}}
\newcommand{\raro}{\rightarrow}
\newcommand{\NI}{\noindent}
\newcommand{\T}{\tilde{T}}
\newcommand{\h}{\tilde{\mathcal{H}}}
\begin{document}
%\today

\title[Multiplicities, invariant subspaces and an additive formula]{Multiplicities, invariant subspaces and an additive formula}

\dedicatory{Dedicated to Professor Kalyan Bidhan Sinha on the
occasion of his 75th birthday}

\author[Chattopadhyay] {Arup Chattopadhyay}
\address{Department of Mathematics, Indian Institute of Technology Guwahati, Guwahati, 781039, India}
\email{arupchatt@iitg.ac.in, 2003arupchattopadhyay@gmail.com}

\author[Sarkar]{Jaydeb Sarkar}
\address{Indian Statistical Institute, Statistics and Mathematics Unit, 8th Mile, Mysore Road, Bangalore, 560059,
India}
\email{jay@isibang.ac.in, jaydeb@gmail.com}

\author[Sarkar]{Srijan Sarkar}
\address{Department of Mathematics, Indian Institute of Science, Bangalore, 560012, India}
\email{srijans@iisc.ac.in,
srijansarkar@gmail.com}

\subjclass[2010]{47A13, 47A15, 47A16, 47A80, 47B37, 47B38, 47M05,
46C99, 32A35, 32A36, 32A70}

\keywords{Hardy space, Dirichlet space, Bergman and weighted Bergman spaces, polydisc, rank, multiplicity, joint invariant subspaces,
semi-invariant subspaces, zero-based invariant subspaces, tensor product Hilbert spaces}

\begin{abstract}
Let $T = (T_1, \ldots, T_n)$ be a commuting tuple of bounded linear
operators on a Hilbert space $\mathcal{H}$. The multiplicity of $T$
is the cardinality of a minimal generating set with respect to $T$.
In this paper, we establish an additive formula for multiplicities
of a class of commuting tuples of operators. A special case of the
main result states the following: Let $n \geq 2$, and let
$\mathcal{Q}_i$, $i = 1, \ldots, n$, be a proper closed shift
co-invariant subspaces of the Dirichlet space or the Hardy space
over the unit disc in $\mathbb{C}$. If $\mathcal{Q}_i^{\bot}$, $i = 1, \ldots, n$, is a zero-based shift invariant subspace, then the multiplicity of the
joint $M_{\bm z} = (M_{z_1}, \ldots, M_{z_n})$-invariant subspace
$(\mathcal{Q}_1 \otimes \cdots \otimes \mathcal{Q}_n)^\perp$ of the
Dirichlet space or the Hardy space over the unit polydisc in
$\mathbb{C}^n$ is given by
\[
\mbox{mult}_{M_{\bm z}|_{ (\mathcal{Q}_1 \otimes \cdots \otimes
\mathcal{Q}_n)^\perp}} (\mathcal{Q}_1 \otimes \cdots \otimes
\mathcal{Q}_n)^\perp = \sum_{i=1}^n
(\mbox{mult}_{M_z|_{\mathcal{Q}_i^\perp}} (\mathcal{Q}_i^{\bot})) =
n.
\]
A similar result holds for the Bergman space over the unit polydisc.
\end{abstract}
\maketitle

\section{Introduction}

This paper is concerned with an additive formula for a numerical
invariant of commuting tuples of bounded linear operators on Hilbert
spaces. The additive formula arises naturally in connection with a class of simple invariant subspaces of the two-variable Hardy space $H^2(\D^2)$ \cite{CDS1}. From function Hilbert space point of view, our additive formula is more refined for zero-based invariant subspaces of the Dirichlet space, the Hardy space, the Bergman space and the weighted Bergman spaces over the open unit polydisc $\D^n$ in $\C^n$.

To be more specific, let us first define the numerical invariant.
Given an $n$-tuple of commuting bounded linear operators $T := (T_1,
\ldots, T_n)$ on a Hilbert space $\clh$, we denote by
\[
\mbox{mult}_T(\clh) = \min \{\# G : [G]_T = \clh, G \subseteq
\clh\},
\]
where
\[
[G]_T =\overline{\mbox{span}} \{T^{\bm{k}} (G): {\bm{k} \in \Z^n}\},
\]
and $T^{\bm{k}} = T_1^{k_1} \cdots T_n^{k_n}$ for all ${\bm{k}} =
(k_1, \ldots, k_n) \in \Z^n$. If
\[
\mbox{mult}_T(\clh) = m < \infty,
\]
then we say that the \textit{multiplicity} of $T$ is $m$. One also says that $T$ is $m$-cyclic. If $m = 1$, then we also say that $T$ is \textit{cyclic}, or simply \textit{cyclic}. A subset $G$ of $\clh$ is said to be
\textit{generating subset} with respect to $T$ if $[G]_T = \clh$.

We pause to note that the computation of multiplicities of (even concrete and simple) bounded linear operators is a challenging problem (perhaps due to its inherent dynamical nature). We refer Rudin \cite{R} for concrete (as well as pathological) examples of invariant subspaces of $H^2(\D^2)$ of infinite multiplicities and \cite{CDS1, CDS3, III, I31, I32} for some definite results on computations of multiplicities (also see
\cite{XF}).

The following example, as hinted above, illustrates the complexity
of computations of the multiplicities of general function Hilbert
spaces. As a first step, we consider the Hardy space $H^2(\D)$ over
$\D$ (the space of all square summable analytic functions on $\D$)
and the multiplication operator $M_z$ by the coordinate function
$z$. Let $\cls$ be a closed $M_z$-invariant subspace of $H^2(\D)$.
Then $\clq = \cls^\perp$ is a closed $M_z^*$-invariant subspace of
$H^2(\D)$. It then follows from Beurling that
\[
\mbox{mult}_{M_z|_{\cls}} (\cls) = 1,
\]
that is, $M_z|_{\cls}$ on $\cls$ is cyclic. Moreover, taking into
account that $\mbox{mult}_{M_z} (H^2(\D)) = 1$, we obtain (cf.
Proposition \ref{prop-Qrank})
\[
\mbox{mult}_{P_{\clq}M_z|_{\clq}} (\clq) = 1,
\]
where $P_{\clq}$ denote the orthogonal projection of $H^2(\D)$ onto
$\clq$.

\NI Now we consider the commuting pair of multiplication operators
$M_{\bm z} = (M_{z_1}, M_{z_2})$ on $H^2(\D^2)$ (the Hardy space over the
bidisc). Observe that $H^2(\D^2) \cong H^2(\D) \otimes H^2(\D)$. Let
$\clq_1$ and $\clq_2$ be two non-trivial closed $M_z^*$-invariant
subspaces of $H^2(\D)$. Then $\clq_1 \otimes \clq_2$ is a joint
$(M_{z_1}^*, M_{z_2}^*)$-invariant subspace of $H^2(\D^2)$, and so
$(\clq_1 \otimes \clq_2)^\perp$ is a joint $(M_{z_1},
M_{z_2})$-invariant subspace of $H^2(\D^2)$. Set $M_{\bm z}|_{(\clq_1
\otimes \clq_2)^\perp} = (M_{z_1}|_{(\clq_1 \otimes \clq_2)^\perp},
M_{z_2}|_{(\clq_1 \otimes \clq_2)^\perp})$. An equivalent
reformulation of Douglas and Yang's question (see page 220 in
\cite{DY} and also \cite{CDS1}) then takes the following form: Is
\[
\mbox{mult}_{M_{\bm z}|_{(\clq_1 \otimes \clq_2)^\perp}} (\clq_1 \otimes
\clq_2)^\perp = 2?
\]
The answer to this question is yes and was obtained by Das along
with the first two authors in \cite{CDS1}. This result immediately
motivates (see page 1186, \cite{CDS1}) the following natural
question: Consider the joint $M_{\bm z} = (M_{z_1}, \ldots, M_{z_n})$-invariant
subspace $(\clq_1 \otimes \cdots \otimes \clq_2)^\perp$ of
$H^2(\D^n)$ where $\clq_1, \ldots, \clq_n$ are non-trivial closed
$M_z^*$-invariant subspaces of $H^2(\D)$. Is then
\[
\mbox{mult}_{M_{\bm z}|_{(\clq_1 \otimes \cdots \otimes \clq_n)^\perp}}
(\clq_1 \otimes \cdots \otimes \clq_n)^\perp = n?
\]
This can be reformulated more concretely as follows: Let $\clh_i$ be the Dirichlet space, the Hardy space, the Bergman space, or the weighted Bergman spaces over $\D$ (or, more generally, a reproducing kernel Hilbert spaces of analytic functions on $\D$ for which the operator $M_z$ of multiplication by the coordinate function $z$ on $\clh_i$ is bounded), $i = 1, \ldots, n$. Suppose $\clq_i^{\bot}$ is an $M_z$-invariant closed subspace of $\clh_i$, $i = 1, \ldots, n$. Is then
\[
\mbox{mult}_{M_{\bm z}|_{(\clq_1 \otimes \cdots \otimes \clq_n)^\perp}}
(\clq_1 \otimes \cdots \otimes \clq_n)^\perp = \sum_{i=1}^n
(\mbox{mult}_{M_z|_{\clq_i^\perp}}~\big(\mathcal{Q}_i^{\bot}))?
\]
In this paper, we aim to propose an approach to verify the above equality for a large class of function Hilbert spaces over $\D^n$. The methods and techniques used in this paper are completely different from \cite{CDS1}, and can also be applied for proving more powerful results in the setting of
general Hilbert spaces. There is indeed a more substantial answer, valid in a larger context of tensor products of Hilbert spaces (see Theorem
\ref{thm-main1}).

Let $\clh \subseteq \clo(\D)$ be a reproducing kernel Hilbert space (or, the Dirichlet space, the Hardy space, the Bergman space, or the weighted Bergman spaces over $\D$) and let the operator $M_z$ is bounded on $\clh$. Suppose $\cls$ is a $M_z$-invariant closed subspace of $\clh$. We say that $\cls$ is a \textit{zero-based invariant subspace} if there exists $\lambda \in \D$ such that $f(\lambda) = 0$ for all $f \in \cls$.

A particular case of our main theorem is the following: Let $\clh_i$ be the
Dirichlet space, the Hardy space, the Bergman space, or the weighted Bergman spaces over $\D$. Let $\cls_i$ be an $M_z^*$-invariant closed subspace of $\clh_i$, $i = 1, \ldots, n$. Suppose $\cls_i := \clq_i^{\bot}$ is a zero-based $M_z$-invariant closed subspace of $\clh_i$ such that
\[
\mbox{dim} (\cls_i \ominus z \cls_i) < \infty \quad \mbox{and} \quad [\cls_i \ominus z \cls_i]_{M_{z}|_{\cls_i}} = \cls_i,
\]
for all $i = 1, \ldots, n$, then
\[
\mbox{mult}_{M_{\bm z}|_{(\clq_1 \otimes \cdots \otimes \clq_n)^\perp}}
(\clq_1 \otimes \cdots \otimes \clq_n)^\perp = \sum_{i=1}^n
(\mbox{mult}_{M_z|_{\clq_i^\perp}}~\big(\mathcal{Q}_i^{\bot})) = \sum_{i=1}^n \mbox{dim} (\cls_i \ominus z \cls_i).
\]
Note that the finite dimensional and generating subspace assumptions are automatically satisfied if $\clh_i$ is the Hardy space or the Dirichlet space. However, if $\cls$ is an $M_z$-invariant closed subspace of the Bergman space over $\D$, then 
\[
\mbox{dim} (\cls \ominus z \cls) \in \mathbb{N} \cup \{\infty\}.
\]
We refer the reader to \cite{ABFP, H1, H2} for more information. See also \cite{H3} for related results in the setting of weighted Bergman spaces over $\D$.

The proof of the above additivity formula uses generating
wandering subspace property, geometry of (tensor product) Hilbert
spaces and subspace approximation technique.

The paper is organized as follows. In Section 2, we set up notation
and prove some basic results on weak multiplicity of (not necessarily commuting) $n$-tuples of operators on Hilbert spaces. In Section 3, we study a lower bound multiplicity of joint invariant subspaces of a class of commuting $n$-tuples of operators. The main theorem on additivity formula is proved in Section 4. The paper is concluded in Section 5 with corollaries of the main theorem and some general discussions.

\newsection{Notation and basic results}\label{sec-prel}

In this section, we introduce the notion of weak multiplicities and describe some preparatory results. This notion is not absolutely needed for the main results of this paper as we shall mostly work in the setting of multiplicities. However, we believe that the idea of weak multiplicities of (not necessary commuting) tuples of operators might be of independent interest.

\textsf{Throughout this paper the following notation will be
adopted:} \textit{$T_i$ is a bounded linear operator on a separable
Hilbert space $\clh_i$, $i = 1, \ldots, n$, and $n \geq 2$. We set
\[
\h = \clh_1 \otimes \cdots \otimes \clh_n,
\]
and
\[
\T = (\T_1, \ldots, \T_n).
\]
where
\[
\T_i = I_{\clh_1} \otimes \cdots \otimes I_{\clh_{i-1}} \otimes
{T_i} \otimes I_{\clh_{i+1}} \otimes \cdots \otimes I_{\clh_n} \in
\clb(\tilde{\clh}),
\]
for all $i = 1, \ldots, n$.} It is now clear that $(\T_1, \ldots,
\T_n)$ is a doubly commuting tuple of operators on $\h$ (that is, $\T_i \T_j = \T_j \T_i$ and $\T_p^* \T_q = \T_q \T_p^*$ for all $1 \leq i, j \leq n$ and $1 \leq p < q \leq n$). Moreover, if
$\text{mult}_{T_i}(\clh_i) = 1$ for all $i = 1, \ldots, n$, then
$\text{mult}_{\T}(\h) = 1$. We denote by $\mathbb{D}^n$ the unit
polydisc in $\mathbb{C}^n$ and by $\bm{z}$ the element $(z_1,
\ldots, z_n)$ in $\mathbb{C}^n$.

The above notion of ``tensor product of operators'' is suggested by
natural (and analytic) examples of reproducing kernel Hilbert spaces
over product domains in $\mathbb{C}^n$. For instance, if
$\{\alpha_1, \ldots, \alpha_n\} \subseteq \mathbb{N}$, then
\[
K_{\bm{\alpha}}(\z, \w) : = \prod_{i=1}^n \frac{1}{(1 - z_i
\bar{w}_i)^{\alpha_i}} \quad \quad (\z, \w \in \D^n),
\]
is a positive definite kernel over the polydisc $\D^n$, and the
multiplication operator tuple $(M_{z_1}, \ldots, M_{z_n})$ defines
bounded linear operators on the corresponding reproducing kernel
Hilbert space $L^2_{\bm{\alpha}}(\D^n)$ (known as the weighted
Bergman space over $\mathbb{D}^n$ with weight $\bm{\alpha} =
(\alpha_1, \ldots, \alpha_n)$). It follows that (cf. \cite{T})
\[
\h = L^2_{\alpha_1}(\D) \otimes \cdots \otimes L^2_{\alpha_n}(\D),
\quad \text{and} \quad \tilde{M}_z = (\tilde{M}_{z_1}, \ldots,
\tilde{M}_{z_n}),
\]
where ${M}_{z_i}$ denotes the multiplication operator $M_z$ on
$L^2_{\alpha_i}(\D)$, $i = 1, \ldots, n$. In particular, if
$\bm{\alpha} = (1, \ldots, 1)$, then $\h = H^2(\D^n)$ is the well
known Hardy space over the unit polydisc. We also refer the reader
to Popescu \cite{P1, P2} for elegant and rich theory of ``tensor
product of operators'' in multivariable operator theory.

Let $\clh$ be a Hilbert space, and let $A = (A_1, \ldots, A_n)$ be
an $n$-tuple (not necessarily commuting) of bounded linear operators
on $\clh$. Let
\[
\mbox{w-mult}_A(\clh) = \min \{\# G : [G]_A = \clh, G \subseteq
\clh\},
\]
where
\[
[G]_A =\overline{\mbox{span}} \{A^{\bm{k}} (G): {\bm{k} \in \Z^n}\},
\]
and $A^{\bm{k}} = A_1^{k_1} \cdots A_n^{k_n}$ for all ${\bm{k}} \in
\Z^n$. If $\mbox{w-mult}_A(\clh) = m < \infty$, then we say that the \textit{weak multiplicity} of $A$ is $m$. We say that $A$ is \textit{weakly cyclic} if $\mbox{w-mult}_A(\clh) = 1$. A subset $G$ of $\clh$ is said to be \textit{weakly generating} with respect to $A$ if $[G]_A = \clh$.

Now let $\cll$ be a closed subspace of
$\clh$. Then
\[
\clw_{A}(\cll) := \cll \ominus \sum_{i=1}^n A_i \cll,
\]
is called the \textit{wandering subspace} of $\cll$ with respect to
$P_{\cll}A|_{\cll}$. If, in addition
\[
\cll = \bigvee_{\bm k \in \Z^n} (P_{\cll} A|_{\cll})^{\bm
k}(\clw_{A}(\cll)),
\]
then we say that $P_{\cll}A|_{\cll}$ satisfies the \textit{weakly generating
wandering subspace property}. Here $P_{\cll} A|_{\cll} = (P_{\cll} A_1|_{\cll}, \ldots, P_{\cll}
A_n|_{\cll})$ and
\[
(P_{\cll} A|_{\cll})^{\bm k} = (P_{\cll} A_1|_{\cll})^{k_1} \cdots
(P_{\cll} A_n|_{\cll})^{k_n},
\]
for all $\bm{k} \in \Z^n$.

Note that if $A$ is commuting and $\cll$ is joint $A$-invariant
subspace (that is, $A_i \cll \subseteq \cll$ for all $i = 1 \ldots,
n$), then weakly generating wandering subspace property is commonly
known as \textit{generating wandering subspace property}.

We now proceed to relate weak multiplicities and dimensions of
weakly generating wandering subspaces. Let $A$ be an $n$-tuple of
bounded linear operators on $\clh$, $\cll$ be a joint $A$-invariant
subspace of $\clh$, and let $\clm$ be a closed subspace of $\cll$. Then
\[
P_{\clw_{A}(\cll)}([\clm]_A) = P_{\clw_{A}(\cll)}(\clm),
\]
since
\[
P_{\clw_{A}(\cll)}(A^{\bm{k}} \clm) = 0 \quad \text{for all} \quad \bm{k} \in \Z^n\setminus \{0\}.
\]
Now suppose that $[\clm]_A = \cll$, that is, $\clm$ is a weakly generating subspace of $\cll$ with respect to $A$. Then
\[
\clw_A(\cll) = P_{\clw_{A}(\cll)}(\clm).
\]
Hence
\[
\text{w-mult}_{A|_{\cll}}(\cll) \geq \dim \clw_A(\cll).
\]
Moreover, if $\cll$ satisfies the weakly generating wandering
subspace property, then
\[
\mbox{w-mult}_{A}(\cll) = \dim \clw_{A}(\cll).
\]
Therefore we have proved the following:

\begin{propn}\label{prop-wwrank}
Let $\cll$ be a closed joint $A$-invariant subspace of $\clh$. If
$\cll$ satisfies the weakly generating wandering subspace property with respect to $A_{\cll}$,
then $\mbox{w-mult}_{A}(\cll) = \dim \clw_{A}(\cll)$.
\end{propn}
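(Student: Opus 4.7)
The plan is to establish the equality by matching upper and lower bounds on $\mbox{w-mult}_{A}(\cll)$, both of which are essentially foreshadowed in the discussion preceding the proposition. The driving geometric fact is that for any $\bm{k} \in \Z^n \setminus \{0\}$ and any $m \in \cll$, the element $A^{\bm{k}} m$ lies in $\sum_{i=1}^n A_i \cll$: picking an index $i$ with $k_i \geq 1$ and using joint $A$-invariance of $\cll$ to absorb all the other factors (and the $k_i - 1$ lower powers of $A_i$) into $\cll$, one can exhibit $A^{\bm{k}} m$ as $A_i$ applied to an element of $\cll$. Since $\clw_A(\cll) = \cll \ominus \sum_{i=1}^n A_i \cll$, this yields $A^{\bm{k}} m \perp \clw_A(\cll)$.

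For the lower bound $\mbox{w-mult}_{A}(\cll) \geq \dim \clw_A(\cll)$, I would pick a minimal weakly generating set $G \subseteq \cll$ with $\#G = r := \mbox{w-mult}_A(\cll)$. The orthogonality observation, together with continuity of the projection $P_{\clw_A(\cll)}$, gives
\[
P_{\clw_A(\cll)}([G]_A) \;=\; P_{\clw_A(\cll)}(\mathrm{span}\, G),
\]
a subspace of dimension at most $r$. Since $[G]_A = \cll$ and $\clw_A(\cll) \subseteq \cll$, the left-hand side equals $\clw_A(\cll)$, so $\dim \clw_A(\cll) \leq r$.

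For the reverse inequality, I would appeal to the weakly generating wandering subspace hypothesis directly. Let $G$ be an orthonormal basis of $\clw_A(\cll)$, so that $\#G = \dim \clw_A(\cll)$. Joint $A$-invariance of $\cll$ yields $P_{\cll} A_i|_{\cll} = A_i|_{\cll}$, and hence $(P_{\cll}A|_{\cll})^{\bm{k}} = A^{\bm{k}}|_{\cll}$, so the hypothesis
\[
\cll \;=\; \bigvee_{\bm{k} \in \Z^n} (P_{\cll} A|_{\cll})^{\bm{k}} (\clw_A(\cll))
\]
is precisely the statement $[\clw_A(\cll)]_A = \cll$. Therefore $[G]_A = \cll$, giving $\mbox{w-mult}_A(\cll) \leq \#G = \dim \clw_A(\cll)$.

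I do not expect any serious obstacle. The only subtle step is the orthogonality claim in the non-commuting setting, which requires the ordered factorization argument sketched above; once it is in hand the remainder is bookkeeping, and the argument applies uniformly whether $\dim \clw_A(\cll)$ is finite or infinite, provided cardinalities are interpreted in the Hilbert-space-dimension sense.
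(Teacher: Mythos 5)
Your proposal is correct and follows essentially the same route as the paper: the lower bound comes from projecting a minimal weakly generating set onto $\clw_A(\cll)$ using the orthogonality $P_{\clw_A(\cll)}(A^{\bm{k}}\cll) = 0$ for $\bm{k} \neq 0$, and the upper bound comes from observing that the weakly generating wandering subspace hypothesis exhibits a basis of $\clw_A(\cll)$ as a weakly generating set. You merely make explicit two points the paper leaves implicit — the ordered factorization (leftmost nonzero index) needed for the orthogonality claim in the non-commuting setting, and the identification $(P_{\cll}A|_{\cll})^{\bm{k}} = A^{\bm{k}}|_{\cll}$ on the invariant subspace $\cll$.
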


We now proceed to a variation of Lemma 2.1 in \cite{CDS1} which
relates the multiplicity of a commuting tuple of operators with the
weak-multiplicity of the compressed tuple to a semi-invariant
subspace.

\begin{lemma}\label{lemma-cds}
Let $A$ be an $n$-tuple of bounded linear operators on a Hilbert
space $\clh$. Let $\cll_1$ and $\cll_2$ be two joint $A$-invariant
subspaces of $\clh$ and $\cll_2 \subseteq \cll_1$. If $\cll = \cll_1
\ominus \cll_2$, then
\[
\mbox{w-mult}_{P_{\cll} A|_{\cll}}(\cll) \leq
\mbox{w-mult}_{A|_{\cll_1}}(\cll_1).
\]
\end{lemma}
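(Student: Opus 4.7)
The plan is to show that projecting a weakly generating subset of $\cll_1$ (with respect to $A$) onto $\cll$ produces a weakly generating subset of $\cll$ (with respect to the compression $P_\cll A|_\cll$), from which the cardinality inequality follows immediately. The whole argument hinges on a single intertwining identity between the $A^{\bm k}$-orbit of a vector in $\cll_1$ and the $(P_\cll A|_\cll)^{\bm k}$-orbit of its projection.

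First I would establish the identity
\[
P_{\cll}\, A^{\bm{k}} h \;=\; (P_{\cll} A|_{\cll})^{\bm{k}}\, P_{\cll} h \qquad (h \in \cll_1,\ \bm{k} \in \Z^n).
\]
The one-step case $P_{\cll} A_j h = (P_{\cll} A_j|_\cll) P_{\cll} h$ for $h \in \cll_1$ follows from the orthogonal decomposition $h = P_{\cll} h + P_{\cll_2} h$ together with the invariance $A_j \cll_2 \subseteq \cll_2$: the term $A_j P_{\cll_2} h$ lies in $\cll_2$, hence vanishes under $P_{\cll}$. Since $\cll_1$ is itself $A$-invariant, every intermediate vector $A_r^{k_r} \cdots A_n^{k_n} h$ stays in $\cll_1$, so the one-step identity can be cascaded (peeling off one $A_j$ at a time, from the outside in) to give the full identity; this handles the non-commutativity of $A$.

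Next I would pick a weakly generating subset $G \subseteq \cll_1$ with $\#G = \mathrm{w\text{-}mult}_{A|_{\cll_1}}(\cll_1)$ and set $G' := P_{\cll}(G) \subseteq \cll$. The identity above shows that for every $h \in G$ and $\bm{k} \in \Z^n$,
\[
P_{\cll} A^{\bm{k}} h \in \overline{\mathrm{span}}\bigl\{ (P_{\cll} A|_{\cll})^{\bm{k}} g' : g' \in G',\ \bm{k} \in \Z^n \bigr\} = [G']_{P_{\cll} A|_{\cll}}.
\]
For an arbitrary $f \in \cll \subseteq \cll_1$, applying $P_{\cll}$ to an approximating linear combination of vectors $A^{\bm{k}} h$ (with $h \in G$) recovers $f = P_{\cll} f$ as a limit of linear combinations of vectors $(P_{\cll} A|_{\cll})^{\bm k} g'$ with $g' \in G'$. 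Hence $[G']_{P_{\cll} A|_{\cll}} = \cll$.

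Finally, since $\#G' \leq \#G$, we obtain
\[
\mathrm{w\text{-}mult}_{P_{\cll} A|_{\cll}}(\cll) \;\leq\; \#G' \;\leq\; \#G \;=\; \mathrm{w\text{-}mult}_{A|_{\cll_1}}(\cll_1),
\]
which is the desired inequality. The only delicate point is the iterated projection identity, and that is handled cleanly by the invariance of $\cll_1$ and $\cll_2$ under $A$; no commutativity of $A$ is needed.
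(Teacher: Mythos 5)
Your proof is correct and follows essentially the same route as the paper: both hinge on the identity $P_{\cll} A^{\bm{k}} P_{\cll_1} = (P_{\cll}A P_{\cll})^{\bm{k}}$, obtained from the $A$-invariance of $\cll_2$ (to kill the $\cll_2$-component) and of $\cll_1$ (to iterate), and then conclude by projecting a minimal weakly generating set of $\cll_1$ onto $\cll$. The only difference is presentational --- you phrase the key identity vector-wise for $h \in \cll_1$ while the paper phrases it as an operator identity --- so there is nothing substantive to compare.
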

\begin{proof}
We have $P_{\cll} A_j P_{\cll} = P_{\cll} A_j P_{\cll_1} - P_{\cll}
A_j P_{\cll_2}$ and thus by $A_j \cll_2 \subseteq \cll_2$ we infer
that
\[
P_{\cll} A_j P_{\cll} = P_{\cll} A_j P_{\cll_1},
\]
for all $j = 1, \ldots, n$. Since $A_j \cll_1 \subseteq \cll_1$, we
have
\[
(P_{\cll} A_i P_{\cll}) (P_{\cll} A_j P_{\cll}) = P_{\cll} A_i P_{\cll_1} A_j P_{\cll_1},
\]
that is
\[
(P_{\cll} A_i P_{\cll}) (P_{\cll} A_j P_{\cll}) = P_{\cll} (A_i
A_j)P_{\cll_1},
\]
for all $i, j = 1, \ldots, n$, and so
\[
(P_{\cll} A P_{\cll})^{\bm{k}} = P_{\cll} A^{\bm{k}} P_{\cll_1},
\]
for all $\bm{k} \in \Z^n$. Clearly, if $G$ is a minimal generating
subset of $\cll_1$ with respect to $A|_{\cll_1}$, then $P_{\cll} G$
is a generating subset of $\cll$ with respect to $P_{\cll}
A|_{\cll}$, and thus $\mbox{w-mult}_{P_{\cll} A|_{\cll}}(\cll) \leq
\mbox{w-mult}_{A|_{\cll_1}}(\cll_1)$. This completes the proof of the lemma.
\end{proof}

In particular, if $\cll_1 = \clh$, then $\clq := \clh \ominus
\cll_2$ is a joint $(A_1^*, \ldots, A_n^*)$-invariant subspace of $\clh$. In this case, denote by $C_i = P_{\clq} A_i|_{\clq}$ the compression of $A_i$, $i = 1, \ldots, n$, and define the $n$-tuple on $\clq$ as
\[
C_{\clq} = (C_1, \ldots, C_n).
\]
Then we have the following estimate:
\[
\mbox{w-mult}_{C_{\clq}}(\clq) \leq \text{w-mult}_{A}(\clh).
\]
Moreover, we also have

\begin{cor}\label{prop-Qrank}
Let $A = (A_1, \ldots, A_n)$ be a commuting tuple of bounded linear operators on a Hilbert space
$\clh$. If $\clq$ is a closed joint $A^*$-invariant subspace of
$\clh$, then
\[
\mbox{mult}_{C_{\clq}}(\clq) \leq \text{mult}_{A}(\clh).
\]
\end{cor}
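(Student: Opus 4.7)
The plan is to derive this corollary from Lemma \ref{lemma-cds} by specializing to the commuting setting. Specifically, I would take $\cll_1 = \clh$ and $\cll_2 = \clq^\perp$ in the lemma. Since $\clq$ is joint $A^*$-invariant by hypothesis, $\clq^\perp$ is joint $A$-invariant, and $\clh$ itself is trivially joint $A$-invariant, so the hypotheses of the lemma are satisfied. The resulting semi-invariant subspace $\cll_1 \ominus \cll_2$ is precisely $\clq$, and the compression $P_{\cll} A|_{\cll}$ coincides with $C_{\clq}$. The lemma therefore yields the weak-multiplicity estimate
\[
\mbox{w-mult}_{C_{\clq}}(\clq) \leq \mbox{w-mult}_{A}(\clh).
\]

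The remaining task is to upgrade this inequality from weak multiplicity to multiplicity. Comparing the two definitions, $\mbox{mult}$ and $\mbox{w-mult}$ coincide whenever the tuple under consideration is commuting, since both are computed from the same family of monomials $T^{\bm k} = T_1^{k_1} \cdots T_n^{k_n}$. The tuple $A$ is commuting by hypothesis, so $\mbox{mult}_A(\clh) = \mbox{w-mult}_A(\clh)$. It remains to verify that $C_{\clq} = (C_1, \ldots, C_n)$ is also commuting. This is where the $A^*$-invariance of $\clq$ (equivalently, the $A$-invariance of $\clq^\perp$) plays its role: for any $v \in \clq$ and any $i, j$, the vector $P_{\clq^\perp} A_j v$ lies in $\clq^\perp$, and since $\clq^\perp$ is $A$-invariant, $A_i P_{\clq^\perp} A_j v \in \clq^\perp$. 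Applying $P_{\clq}$ annihilates this piece, so $P_{\clq} A_i P_{\clq} A_j|_{\clq} = P_{\clq} A_i A_j|_{\clq}$. Combined with the commutativity of $A$, this gives $C_i C_j = P_{\clq} A_i A_j|_{\clq} = P_{\clq} A_j A_i|_{\clq} = C_j C_i$.

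Putting these pieces together yields $\mbox{mult}_{C_{\clq}}(\clq) = \mbox{w-mult}_{C_{\clq}}(\clq) \leq \mbox{w-mult}_A(\clh) = \mbox{mult}_A(\clh)$. There is no real obstacle here; the corollary is essentially a clean translation of Lemma \ref{lemma-cds} into the commuting language. The only substantive step is the verification that the compressed tuple $C_{\clq}$ inherits commutativity, and this follows immediately from the orthogonal decomposition $\clh = \clq \oplus \clq^\perp$ together with the joint $A$-invariance of $\clq^\perp$.
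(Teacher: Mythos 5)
Your proof is correct and takes essentially the same route as the paper: you specialize Lemma \ref{lemma-cds} to $\cll_1 = \clh$, $\cll_2 = \clq^\perp$ to obtain the weak-multiplicity estimate, and then observe that $\mbox{mult}$ and $\mbox{w-mult}$ coincide for commuting tuples, verifying (correctly, via the $A$-invariance of $\clq^\perp$) that the compressed tuple $C_{\clq}$ is itself commuting. The paper leaves the commutativity of $C_{\clq}$ implicit as a standard fact, so your write-up is just a slightly more explicit version of the same argument.
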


This has the following immediate (and well-known) application: Suppose $A$ is a commuting tuple on $\clh$. If $A$ is cyclic, then $C_{\clq}$ on $\clq$ is also cyclic.

\section{A lower bound for multiplicities}\label{sec-lowerbound}

In this section, we first lay out the setting of joint invariant subspaces of our discussions throughout the paper. Then we present a lower bound of multiplicities of those joint invariant subspaces. We begin by recalling the following useful lemma (cf. Lemma 2.5, \cite{sarkar}):

\begin{lemma}\label{lemma-old}
If $\{A_i\}_{i=1}^n$ is a commuting set of orthogonal projections on
a Hilbert space $\clk$, then $\displaystyle \cll = \sum_{i=1}^n
\mbox{ran} A_i$ is a closed subspace of $\clk$, and
\[
\begin{split}
P_{\cll} & = I - \prod_{i=1}^n (I - A_i)
\\
& = A_1(I-A_{2}) \ldots (I-A_{n}) \oplus A_{2}(I-A_{3}) \ldots
(I-A_{n}) \oplus \ldots + A_{n-1}(I-A_{n}) \oplus A_n.
\end{split}
\]
\end{lemma}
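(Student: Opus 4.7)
The plan is to introduce the auxiliary family of operators
\[
B_k \bydef A_k \prod_{j=k+1}^n (I - A_j) \quad (1 \leq k \leq n-1), \qquad B_n \bydef A_n,
\]
and verify that $\{B_k\}_{k=1}^n$ is a family of mutually orthogonal projections whose sum equals both $I - \prod_{i=1}^n (I - A_i)$ and the orthogonal projection onto $\sum_{i=1}^n \mbox{ran}\, A_i$. Once this is done the statement follows at once: $\cll$ is the range of a projection, hence closed, and both displayed expressions for $P_{\cll}$ are established simultaneously.

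First, since the $A_i$ commute and are self-adjoint, so do the $I - A_i$; in particular each $B_k$ is a product of commuting orthogonal projections and is therefore itself an orthogonal projection. For $k < l$, the definition of $B_k$ contains the factor $I - A_l$ while $B_l$ contains the factor $A_l$, and $A_l(I - A_l) = 0$ forces $B_k B_l = 0$. Hence $\{B_k\}$ is a mutually orthogonal family and $\sum_{k=1}^n B_k$ is an orthogonal projection.

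Next I would establish $\sum_{k=1}^n B_k = I - \prod_{i=1}^n (I - A_i)$ by induction on $n$, using the telescoping identity
\[
I - \prod_{i=1}^n (I - A_i) = \Bigl( I - \prod_{i=2}^n (I - A_i) \Bigr) + A_1 \prod_{i=2}^n (I - A_i),
\]
and applying the inductive hypothesis to $\{A_2, \ldots, A_n\}$ on the first summand while recognising the second summand as $B_1$. Finally, to identify the range of this projection with $\sum_i \mbox{ran}\, A_i$, note that $\mbox{ran}\, B_k \subseteq \mbox{ran}\, A_k$ yields $\sum_k \mbox{ran}\, B_k \subseteq \sum_k \mbox{ran}\, A_k$; conversely, $A_k$ commutes with $\prod_i (I - A_i)$ and annihilates it through the factor $A_k(I - A_k) = 0$, so $\mbox{ran}\, A_k \subseteq \ker \prod_i (I - A_i) = \mbox{ran}\bigl( I - \prod_i (I - A_i) \bigr)$ for every $k$. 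Combining both inclusions gives $\sum_i \mbox{ran}\, A_i = \mbox{ran}\bigl( \sum_k B_k \bigr)$, which is closed; the projection onto it is $\sum_k B_k = I - \prod_i (I - A_i)$, the claimed decomposition.

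No step here is genuinely delicate; the entire argument rests on spotting the correct ordered definition of $B_k$ so that consecutive projections telescope cleanly. The only minor point requiring care is the verification that $\mbox{ran}\, A_k$ lies in the range of $I - \prod_i (I-A_i)$, which is what makes the second inclusion work and is ultimately the reason $\cll$ is closed without any additional topological hypothesis.
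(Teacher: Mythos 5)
Your proof is correct. Note, however, that the paper does not prove this lemma itself — it cites it as Lemma 2.5 of Sarkar's \emph{Jordan blocks of $H^2(\mathbb{D}^n)$} \cite{sarkar}, so there is no in-paper proof to compare against. That said, the argument you give — defining the ordered telescoping projections $B_k = A_k\prod_{j>k}(I-A_j)$, verifying mutual orthogonality via the factor $A_l(I-A_l)=0$, establishing $\sum_k B_k = I - \prod_i(I-A_i)$ by the inductive telescope, and then closing the circle of inclusions $\sum_k \operatorname{ran} B_k \subseteq \sum_k \operatorname{ran} A_k \subseteq \ker\prod_i(I-A_i) = \operatorname{ran}\bigl(\sum_k B_k\bigr)$ — is the standard proof of this identity and is exactly the decomposition the lemma's statement is built around, so the inclusion chain sandwiching $\sum_i \operatorname{ran} A_i$ between the range of $\sum_k B_k$ on both sides is the one point worth spelling out carefully, and you have done so. The closedness of $\cll$ then falls out for free as the range of a bounded projection, with no separate topological argument required.
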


Next, we introduce the invariant subspaces of interest. Again, we
continue to follow the notations as introduced in Section
\ref{sec-prel}.

Let $\clh_i$ be a Hilbert space, $T_i$ a bounded linear operator on
$\clh_i$, and let $\clq_i$ be a closed $T_i^*$-invariant subspace of
$\clh_i$, $i = 1, \ldots, n$. Set $\cls_i = \clq_i^\perp$ and
\[
\quad P_i = P_{\cls_i} \quad\mbox{and} \quad Q_i = I_{\clh_i} - P_{\cls_i},
\]
for all $i = 1, \ldots, n$. Recall again that
\[
\tilde{P}_i = I_{\clh_1} \ot \ldots \ot I_{\clh_{i-1}} \ot
P_{\cls_{i}} \ot I_{\clh_{i+1}} \ot  \ldots \ot I_{\clh_n} \in
\clb({\h}),
\]
and
\[
\tilde{P}_i \tilde{P}_j = \tilde{P}_j \tilde{P}_i,
\]
for all $i, j = 1, \ldots, n$. By Lemma \ref{lemma-old}, it then follows that
\begin{equation}\label{eq-S Pi}
\cls = \sum_{i=1}^n \mbox{ran} \tilde{P}_i,
\end{equation}
is a joint $\tilde{T}$-invariant subspace of ${\h}$. Moreover
\[
\cls = (\clq_1 \otimes \cdots \otimes \clq_n)^\perp.
\]
Our main goal is to compute the multiplicity of the commuting tuple $\tilde{T}|_{\cls} = (\tilde{T}_1|_{\cls}, \ldots, \tilde{T}_n|_{\cls})$ on $\cls$.

For each $i = 1, \ldots, n$, define $X_i \in \clb(\tilde{\clh})$ by
\[
X_i = \tilde{P}_i \tilde{Q}_{i+1} \ldots \tilde{Q}_{n}.
\]
Then $X_i^2 = X_i = X_i^*$ and
\[
X_p X_q = 0,
\]
for all $i = 1, \ldots, n$, and $p \neq q$. This implies that $\{X_i\}_{i=1}^n$ is a set of orthogonal projections with orthogonal ranges. Then, by virtue of \eqref{eq-S Pi} one can further rewrite $\cls$ as
\begin{equation}\label{eq-S Pi Xi}
\cls = \sum_{i=1}^n \mbox{ran} \tilde{P}_i = \bigoplus_{i=1}^n
\mbox{ran} X_i,
\end{equation}
and by Lemma \ref{lemma-old} one represent $P_{\cls}$ as
\[
P_{\cls} =  \bigoplus_{i=1}^n X_i.
\]
Define
\begin{equation}\label{eq-F}
\clf = \mbox{ran} X_1 \oplus \mbox{ran}(\tilde{Q}_1 X_2) \oplus
\cdots \oplus \mbox{ran}(\tilde{Q}_1 \cdots \tilde{Q}_{n-1}X_n).
\end{equation}\
Then, as easily seen
\[
\tilde{Q}_i X_j = X_j \tilde{Q}_i,
\]
for all $1 \leq i \leq j$ and $j = 1, \ldots, n$, it follows that
\[
\mbox{ran} (\tilde{Q}_1 \cdots \tilde{Q}_p X_{p+1}) \subseteq
\mbox{ran} X_{p+1},
\]
for all $p = 1, \ldots, n-1$, and consequently
\[
\cls \supseteq \clf .
\]
Our first aim is to analyze the closed subspace $\clf$ and to
construct $n-1$ nested (and suitable) closed subspaces
$\{\clf_i\}_{i=1}^{n-1}$ such that
\[
\cls \supseteq \clf_1 \supseteq \cdots \supseteq \clf_{n-1} = \clf.
\]
To this end, first set
\[
\clf_1 = \mbox{ran} X_1 \oplus \mbox{ran} X_2 \oplus \cdots \oplus
\mbox{ran}X_{n-1} \oplus \mbox{ran} (\tilde{Q}_{n-1} X_n),
\]
and define
\[
\clf_2 = \mbox{ran} X_1 \oplus \mbox{ran} (\tilde{Q}_1 X_2) \oplus
\cdots \oplus \mbox{ran}(\tilde{Q}_1 X_{n-1}) \oplus \mbox{ran}
(\tilde{Q}_1 \tilde{Q}_{n-1} X_n).
\]
We then proceed to define $\clf_i$, $i = 2, \ldots, n-1$, as
\begin{equation}\label{eq-Fi}
\clf_i = \mbox{ran} \Big(X_1 \oplus  \tilde{Q}_1 X_2 \oplus \cdots
\oplus (\prod_{t=1}^{i-1} \tilde{Q}_t) X_i \oplus \cdots \oplus
(\prod_{t=1}^{i-1} \tilde{Q}_t) X_{n-1} \oplus (\prod_{t=1}^{i-1}
\tilde{Q}_t \tilde{Q}_{n-1}) X_n\Big).
\end{equation}
Therefore
\begin{equation}\label{eq-Fi}
P_{\clf_i} = X_1 \oplus  \tilde{Q}_1 X_2 \oplus \cdots
\oplus (\prod_{t=1}^{i-1} \tilde{Q}_t) X_i \oplus \cdots \oplus
(\prod_{t=1}^{i-1} \tilde{Q}_t) X_{n-1} \oplus (\prod_{t=1}^{i-1}
\tilde{Q}_t \tilde{Q}_{n-1}) X_n,
\end{equation}
for all $i = 2, \ldots, n-1$. Therefore, denoting
\[
A = (\prod_{t=1}^{i-2} \tilde{Q}_t)\tilde{P}_{i-1},
\]
we have
\begin{equation}\label{eq-Fii-1}
P_{\clf_{i-1} \ominus \clf_i} = A (X_i \oplus X_{i+1} \oplus \cdots
\oplus X_{n-1} \oplus \tilde{Q}_{n-1} X_n),
\end{equation}
for all $i = 2, \ldots, n-1$. Since $A X_p = X_p A$ for all $p = i,
\ldots, n$, the above formula yields
\[
P_{\clf_{i-1} \ominus \clf_i} = (X_i \oplus X_{i+1} \oplus \cdots
\oplus X_{n-1} \oplus \tilde{Q}_{n-1} X_n) A.
\]

Let $i \in \{2, \ldots, n-1\}$ be a fixed natural number. \textsf{We
claim that} $\clf_{i-1} \ominus \clf_i$ is a joint $P_{\clf_{i-1}}
\tilde{T} P_{\clf_{i-1}}$-invariant subspace, that is
\[
P_{\clf_{i-1}} \tilde{T}_j (\clf_{i-1} \ominus \clf_i) \subseteq
\clf_{i-1} \ominus \clf_i.
\]
or, equivalently
\[
(P_{\clf_{i-1}} \tilde{T}_j P_{\clf_{i-1}}) P_{\clf_{i-1} \ominus
\clf_i} = P_{\clf_{i-1} \ominus \clf_i} \tilde{T}_j|_{\clf_{i-1}
\ominus \clf_i},
\]
for all $j = 1, \ldots, n$. There are four cases:

\vspace{0.1in}

\NI\textsf{Case I:} If $j > i$, then one has $\tilde{T}_j A = A
\tilde{T}_j$ and so
\[
P_{\clf_{i-1} \ominus \clf_i} \tilde{T}_{j} P_{\clf_{i-1} \ominus
\clf_i} = A (X_i \oplus X_{i+1} \oplus \cdots \oplus \tilde{Q}_{n-1}
X_n) \tilde{T}_{j}(X_i \oplus X_{i+1} \oplus \cdots \oplus
\tilde{Q}_{n-1} X_n).
\]
On the other hand, since
\[
P_{\clf_{i-1}} \tilde{T}_{j} P_{\clf_{i-1} \ominus \clf_i} =
P_{\clf_{i-1}} A \tilde{T}_j (X_i \oplus \cdots \oplus X_{j} \oplus
\cdots \oplus \tilde{Q}_{n-1} X_n),
\]
and
\[
\begin{split}
P_{\clf_{i-1}} = X_1 \oplus & (\tilde{Q}_1 X_2) \oplus \cdots \oplus
(\prod_{t=1}^{i-2} \tilde{Q}_t X_{i-1}) \oplus (\prod_{t=1}^{i-2}
\tilde{Q}_t X_{i}) \oplus
\\
& \cdots \oplus (\prod_{t=1}^{i-2}
\tilde{Q}_t X_{n-1}) \oplus (\prod_{t=1}^{i-2} \tilde{Q}_t
\tilde{Q}_{n-1} X_n),
\end{split}
\]
it follows that
\[
P_{\clf_{i-1}} \tilde{T}_{j} P_{\clf_{i-1} \ominus \clf_i} =
A(X_{i-1} \oplus X_i \oplus \cdots \oplus \tilde{Q}_{n-1} X_n)
\tilde{T}_j (X_i \oplus \cdots \oplus \tilde{Q}_{n-1} X_n),
\]
as $X_t A = 0$ for all $t = 1, \ldots, i-2$, and $\displaystyle
\prod_{t=1}^{i-2} \tilde{Q}_t A = A$. Moreover, since
\[
X_{i-1} \tilde{T}_j = (\tilde{P}_{i-1} \tilde{Q}_i \cdots
\tilde{Q_j} \cdots \tilde{Q}_n) \tilde{T}_j = \tilde{P}_{i-1}
\tilde{Q}_i \cdots \widetilde{Q_j T_j Q_j} \cdots \tilde{Q}_n,
\]
it follows that
\[
X_{i-1} \tilde{T}_j X_t = 0,
\]
for all $t = i, \ldots, n$. This leads to
\[
P_{\clf_{i-1}} \tilde{T}_{j} P_{\clf_{i-1} \ominus \clf_i} =  A (X_i
\oplus X_{i+1} \oplus \cdots \oplus \tilde{Q}_{n-1} X_n)
\tilde{T}_{j}(X_i \oplus X_{i+1} \oplus \cdots \oplus
\tilde{Q}_{n-1} X_n).
\]

\NI\textsf{Case II:} If $j = i$, then
\[
\tilde{T}_{i} P_{\clf_{i-1} \ominus \clf_i} = A ((\widetilde{T_i
P_i} \tilde{Q}_{i+1} \cdots \tilde{Q}_n) \oplus \tilde{T}_i X_{i+1}
\oplus \cdots \oplus \tilde{T}_i \tilde{Q}_{n-1} X_n),
\]
implies that
\[
\begin{split}
P_{\clf_{i-1}} \tilde{T}_{i} P_{\clf_{i-1} \ominus \clf_i} & =
(\prod_{t=1}^{i-2} \tilde{Q}_t) (X_{i-1} \oplus X_{i} \oplus \cdots
\oplus \tilde{Q}_{n-1} X_n) \tilde{T}_{i} P_{\clf_{i-1} \ominus
\clf_i}
\\
& =  (\prod_{t=1}^{i-2} \tilde{Q}_t) (X_{i-1} \oplus X_{i} \oplus
\cdots \oplus \tilde{Q}_{n-1} X_n) \tilde{T}_{i} A (X_i \oplus
X_{i+1} \oplus \cdots \oplus \tilde{Q}_{n-1} X_n)
\\
& =  A (X_{i-1} \oplus X_{i} \oplus \cdots \oplus \tilde{Q}_{n-1}
X_n) \tilde{T}_{i}(X_i \oplus X_{i+1} \oplus \cdots \oplus
\tilde{Q}_{n-1} X_n)
\\
& = P_{\clf_{i-1} \ominus \clf_i} \tilde{T}_i P_{\clf_{i-1} \ominus
\clf_i},
\end{split}
\]
where the next-to-last equality follows from the fact again that $A
\tilde{T}_i = \tilde{T}_i A$, $\displaystyle(\prod_{t=1}^{i-2}
\tilde{Q}_t) A = A$ and $X_{i-1} \tilde{T}_i X_t = 0$ for all $t =
i, \ldots, n$.

\NI\textsf{Case III:} Let $j = i-1$. Since
\[
\tilde{T}_{i-1} A = (\prod_{t=1}^{i-2} \tilde{Q}_t)
\widetilde{T_{i-1} P_{i-1}} = A \widetilde{T_{i-1} P_{i-1}},
\]
by setting
\[
\hat{A} = (\prod_{t=1}^{i-2} \tilde{Q}_t)\widetilde{T_{i-1}P_{i-1}},
\]
it follows that
\[
\tilde{T}_{i-1} P_{\clf_{i-1} \ominus \clf_i} = \hat{A} X_i \oplus
\hat{A} X_{i+1} \oplus \cdots \oplus \hat{A} X_{n-1} \oplus \hat{A}
\tilde{Q}_{n-1} X_n.
\]
Then $X_p \hat{A} = \hat{A} X_p$ for all $p = i, \ldots, n$, and $A
\hat{A} = \hat{A}$ implies that
\[
\begin{split}
P_{\clf_{i-1}} \tilde{T}_{i-1} P_{\clf_{i-1} \ominus \clf_i} & =
\hat{A} (X_i \oplus X_{i+1} \oplus \cdots \oplus X_{n-1} \oplus
\tilde{Q}_{n-1} X_n)
\\
& = P_{\clf_{i-1} \ominus \clf_i} \tilde{T}_{i-1} P_{\clf_{i-1}
\ominus \clf_i},
\end{split}
\]
where the second equality follows from \eqref{eq-Fii-1} and the fact
that $T_{i-1}P_{i-1} = P_{i-1}T_{i-1}P_{i-1}$.

\NI\textsf{Case IV:} Let $j < i-1$. Then it is clear that
\[
\tilde{T}_j P_{\clf_{i-1} \ominus \clf_i} = \hat{A} (X_i \oplus
X_{i+1} \oplus \cdots \oplus X_{n-1} \oplus \tilde{Q}_{n-1} X_n),
\]
where $\hat{A} = \tilde{T}_j {A}$, that is
\[
\hat{A} = \tilde{Q}_1 \cdots \tilde{Q}_{j-1} \widetilde{T_j Q_j}
\tilde{Q}_{j+1} \cdots \tilde{Q}_{i-2} \tilde{P}_{i-1}.
\]
Note that $X_t \hat{A} = \hat{A} X_t$ for all $t = i, \ldots, n$,
and
\[
A \hat{A} = \tilde{Q}_1 \cdots \tilde{Q}_{j-1} \widetilde{Q_j T_j
Q_j} \tilde{Q}_{j+1} \cdots \tilde{Q}_{i-2} \tilde{P}_{i-1}.
\]
Since $X_p X_q = \delta_{pq} X_p$ for all $p$ and $q$, it follows
that
\[
P_{\clf_{i-1}} \tilde{T}_j P_{\clf_{i-1} \ominus \clf_i} = A \hat{A}
(X_i \oplus X_{i+1} \oplus \cdots \oplus X_{n-1} \oplus
\tilde{Q}_{n-1} X_n).
\]
On the other hand, the representation of $\tilde{T}_j P_{\clf_{i-1}
\ominus \clf_i}$ above and \eqref{eq-Fii-1} yields
\[
P_{\clf_{i-1} \ominus \clf_i} \tilde{T}_j P_{\clf_{i-1} \ominus
\clf_i} = A \hat{A} (X_i \oplus X_{i+1} \oplus \cdots \oplus X_{n-1}
\oplus \tilde{Q}_{n-1} X_n),
\]
and proves the claim.

We turn now to prove that $(P_{\clf_i} \tilde{T}_1|_{\clf_i},
\ldots, P_{\clf_i} \tilde{T}_n|_{\clf_i})$ is a commuting tuple for
all $i = 1, \ldots, n-1$, that is
\[
P_{\clf_i} \tilde{T}_s P_{\clf_i} \tilde{T}_t P_{\clf_i} =
P_{\clf_i} \tilde{T}_t P_{\clf_i} \tilde{T}_s P_{\clf_i},
\]
for all $s, t = 1, \ldots, n$. Fix an $i \in \{1, \ldots, n-1\}$ and
let
\begin{equation}\label{eq-PFiMi}
P_{\clf_i} = M_1 \oplus \ldots \oplus M_n,
\end{equation}
where $M_j$, $j = 1, \ldots, n$, denotes the $j$-th summand in the
representation of $P_{\clf_i}$ in \eqref{eq-Fi}. Recalling the terms
in \eqref{eq-Fi}, we see that $M_j$ is a product of $n$ distinct
commuting orthogonal projections of the form $\tilde{P}_k$,
$\tilde{Q}_l$ and $\tilde{I}_{\clh_m}$, $1 \leq k,l,m \leq n$. For
each $s = 1, \ldots, n$, we set
\[
M_j = M_{j,s} \hat{M}_{j,s},
\]
where $M_{j,s}$ is the $s$-th factor of $M_j$ and $\hat{M}_{j,s}$ is
the product of the same factors of $M_j$, except the $s$-th factor
of $M_j$ is replaced by $\tilde{I}_{\clh_s}$. Note again that
$M_{j,s} = \tilde{P}_s, \tilde{Q}_s$, or $\tilde{I}_{\clh_s}$. We
first claim that
\begin{equation}\label{eq-MiTsMk =0}
M_j \tilde{T}_s M_k = 0,
\end{equation}
for all $j \neq k$. Indeed, if $M_{j,s} = \tilde{Q}_s$, then $M_j
\tilde{T}_s M_k = M_{j,s} \hat{M}_{j,s} \tilde{T}_s M_k$ yields
\[
\begin{split}
M_j \tilde{T}_s M_k = {M}_{j,s} \tilde{T}_s \hat{M}_{j,s} {M}_k =
{M}_{j,s} \tilde{T}_s {M}_{j,s} \hat{M}_{j,s} {M}_k = {M}_{j,s}
\tilde{T}_s {M}_{j} M_k = 0,
\end{split}
\]
as $\tilde{Q}_s \tilde{T}_s \tilde{Q}_s = \tilde{Q}_s \tilde{T}_s$.
Similarly, if $M_{j,s} = \tilde{P}_s$, then
\[
\begin{split}
M_j \tilde{T}_s M_k = {M}_{j} \hat{M}_{k,s} \tilde{T}_s M_{k,s} =
{M}_{j} \hat{M}_{k,s} M_{k,s} \tilde{T}_s M_{k,s} = {M}_j M_k
\tilde{T}_s M_{k,s} = 0,
\end{split}
\]
as $\tilde{P}_s \tilde{T}_s \tilde{P}_s = \tilde{T}_s \tilde{P}_s$.
The remaining case, $M_{j,s} = \tilde{I}_{\clh_s}$, follows from the
fact that
\[
M_j \tilde{T}_s M_k = \tilde{T}_s M_j M_k.
\]
This proves the claim. Hence the representation of $P_{\clf_i} \tilde{T}_s
P_{\clf_i}$ simplifies as
\begin{equation}\label{PFitildeTsPFi}
P_{\clf_i} \tilde{T}_s P_{\clf_i} = M_1 \tilde{T}_s M_1 \oplus
\cdots \oplus M_n \tilde{T}_s M_n.
\end{equation}
Thus
\[
P_{\clf_i} \tilde{T}_s P_{\clf_i} \tilde{T}_t P_{\clf_i} = M_1
\tilde{T}_s M_1 \tilde{T}_t M_1 \oplus \cdots \oplus M_n \tilde{T}_s
M_n \tilde{T}_t M_n.
\]
Now if $s \neq t$, then for each $j = 1, \ldots, n$, we have
\[
\begin{split}
M_j \tilde{T}_s M_j \tilde{T}_t M_j & = M_j \hat{M}_{j,s}
\tilde{T}_s M_{j,s} M_{j,t} \tilde{T}_t \hat{M}_{j,t} M_j
\\
& = (M_j \hat{M}_{j,s} M_{j,t}) \tilde{T}_s \tilde{T}_t (M_{j,s}
\hat{M}_{j,t} M_j)
\\
& = M_j \tilde{T}_s \tilde{T}_t M_j,
\end{split}
\]
and hence
\[
(P_{\clf_i} \tilde{T}_s P_{\clf_i}) (P_{\clf_i} \tilde{T}_t
P_{\clf_i}) = M_1 \tilde{T}_s \tilde{T}_t M_1 \oplus \cdots \oplus
M_n \tilde{T}_s \tilde{T}_t M_n.
\]
This completes the proof of the commutativity property of the tuple
$(P_{\clf_i} \tilde{T}_1|_{\clf_i}, \ldots, P_{\clf_i}
\tilde{T}_n|_{\clf_i})$, $i = 1, \ldots, n-1$. Furthermore, if $s=t$, then
\[
(M_j \tilde{T}_s M_j)^2 = M_j \tilde{T}_s^2 M_j.
\]
Indeed, if $M_{j,s} = \tilde{Q}_s$, then $M_j \tilde{T}_s M_j = M_j \tilde{T}_s \hat{M}_{j,s}$ gives us
\[
\begin{split}
M_j \tilde{T}_s M_j \tilde{T}_s M_j  = M_j \tilde{T}_s \hat{M}_{j,s} \tilde{T}_s \hat{M}_{j,s} M_j =  M_j \tilde{T}_s \tilde{T}_s \hat{M}_{j,s} M_j = M_j \tilde{T}_s^2 M_j.
\end{split}
\]
Similarly, if $M_{j,s} = \tilde{P}_s$ or $\tilde{I}_{\clh_s}$, then $M_j \tilde{T}_s M_j = \tilde{T}_s M_{j}$, and hence 
\[
M_j \tilde{T}_s M_j \tilde{T}_s M_j = M_j \tilde{T}_s^2 M_j.
\]
Hence we obtain 
\begin{equation}\label{powerpreserve}
(P_{\clf_i} \tilde{T}_s P_{\clf_i}) (P_{\clf_i} \tilde{T}_t
P_{\clf_i}) = M_1 \tilde{T}_s \tilde{T}_t M_1 \oplus \cdots \oplus
M_n \tilde{T}_s \tilde{T}_t M_n,
\end{equation}
for all $s, t = 1, \ldots, n$. 

Therefore, with the notations introduced above, we have proved the
following:

\begin{lemma}\label{lemma-S}
If $\cls = (\clq_1 \otimes \cdots \otimes \clq_n)^\perp$, then
$\cls$ is a joint $\tilde{T}$-invariant subspace of $\tilde{\clh}$
and
\[
\cls \supseteq \clf_1 \supseteq \cdots \supseteq \clf_{n-1} = \clf,
\]
where $\clf$ and $\clf_i$ are defined as in \eqref{eq-F} and
\eqref{eq-Fi}, respectively. Moreover
\[
P_{\clf_{i-1}} \tilde{T}|_{\clf_{i-1}} = (P_{\clf_{i-1}}
\tilde{T}_1|_{\clf_{i-1}}, \ldots, P_{\clf_{i-1}}
\tilde{T}_n|_{\clf_{i-1}}),
\]
is a commuting tuple and
\[
\Big(P_{\clf_{i-1}} \tilde{T}_j|_{\clf_{i-1}}\Big) (\clf_{i-1}
\ominus \clf_i) \subseteq \clf_{i-1} \ominus \clf_i,
\]
for all $i = 2, \ldots, n-1$, and $j = 1, \ldots, n$.
\end{lemma}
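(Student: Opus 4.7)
The plan is to verify the four assertions in order. For the $\T$-invariance of $\cls$, I would use the hypothesis that each $\clq_i$ is $T_i^*$-invariant, which makes $\clq_1 \ot \cdots \ot \clq_n$ joint $\T^*$-invariant, so that its orthogonal complement $\cls$ is joint $\T$-invariant. The identification $\cls = \sum_{i=1}^n \mbox{ran}\, \tilde P_i$ in \eqref{eq-S Pi} then follows from Lemma \ref{lemma-old} applied to the commuting projections $\{\tilde P_i\}_{i=1}^n$: indeed, the complementary projection $\prod_i \tilde Q_i$ is the orthogonal projection onto $\clq_1 \ot \cdots \ot \clq_n$. For the chain $\cls \supseteq \clf_1 \supseteq \cdots \supseteq \clf_{n-1} = \clf$, the first containment follows from the decomposition \eqref{eq-S Pi Xi} together with $\mbox{ran}(\tilde Q_{n-1} X_n) \subseteq \mbox{ran}\, X_n$, and each successive containment $\clf_{i-1} \supseteq \clf_i$ follows by inspection of \eqref{eq-Fi}, since every summand of $\clf_i$ has an additional $\tilde Q_{i-1}$ factor compared to the corresponding summand of $\clf_{i-1}$.

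For the commutativity of the compressed tuple, I would exploit the block decomposition $P_{\clf_i} = M_1 \oplus \cdots \oplus M_n$ from \eqref{eq-PFiMi}, where each $M_j$ is a product of $n$ commuting projections drawn from $\{\tilde P_k, \tilde Q_k, \tilde I_{\clh_k}\}$. The core claim is $M_j \T_s M_k = 0$ for $j \neq k$, which via coordinate-wise factorization $M_j = M_{j,s} \hat{M}_{j,s}$ reduces to the identities $\tilde Q_s \T_s \tilde Q_s = \tilde Q_s \T_s$ and $\tilde P_s \T_s \tilde P_s = \T_s \tilde P_s$, both immediate from $T_s \cls_s \subseteq \cls_s$ (itself equivalent to the assumed $T_s^*$-invariance of $\clq_s$). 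This collapses $P_{\clf_i} \T_s P_{\clf_i}$ to the block-diagonal form \eqref{PFitildeTsPFi}, and commutativity then falls out of the single-block identity $M_j \T_s M_j \T_t M_j = M_j \T_s \T_t M_j$, obtained by absorbing the middle $M_j$ one coordinate at a time.

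The delicate step, and the main obstacle, is the invariance $(P_{\clf_{i-1}} \T_j|_{\clf_{i-1}})(\clf_{i-1} \ominus \clf_i) \subseteq \clf_{i-1} \ominus \clf_i$. Starting from the explicit form of $P_{\clf_{i-1} \ominus \clf_i}$ in \eqref{eq-Fii-1}, with $A = (\prod_{t=1}^{i-2} \tilde Q_t)\tilde P_{i-1}$, I would split into four cases according to the position of $j$ relative to $i$. When $j > i$, one uses $\T_j A = A \T_j$ together with the annihilation $X_{i-1} \T_j X_t = 0$ for $t \geq i$, which in turn comes from $\tilde Q_s \T_s \tilde Q_s = \tilde Q_s \T_s$ applied to the $\tilde Q_j$ factor inside $X_{i-1}$. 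When $j = i$, the same annihilation closes the case after commuting $A$ past $\T_i$. When $j = i-1$, one absorbs $\T_{i-1}$ into $A$ to obtain $\hat A = A \widetilde{T_{i-1} P_{i-1}}$, and the identity $T_{i-1} P_{i-1} = P_{i-1} T_{i-1} P_{i-1}$ finishes the argument. When $j < i-1$, a similar absorption $\hat A = \T_j A$ combined with the orthogonality $X_p X_q = \delta_{pq} X_p$ completes the verification. The bookkeeping of the many commuting projections is what makes the verification intricate, but the uniform pattern across the four cases keeps it tractable.
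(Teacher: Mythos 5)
Your plan is correct and follows the paper's own argument step for step: the chain of containments from the explicit projection formulas for $\clf_i$, the block-diagonal reduction via $M_j \tilde{T}_s M_k = 0$ for $j\neq k$ to get commutativity, and the same four-case split ($j>i$, $j=i$, $j=i-1$, $j<i-1$) — using the commutation $\tilde{T}_j A = A\tilde{T}_j$ or the absorbed operator $\hat{A}$ together with the orthogonality $X_pX_q=\delta_{pq}X_p$ — to prove that $\clf_{i-1}\ominus\clf_i$ is $P_{\clf_{i-1}}\tilde{T}|_{\clf_{i-1}}$-invariant. One small imprecision: the summands of $\clf_i$ indexed $j<i$ do not acquire an extra $\tilde{Q}_{i-1}$ factor compared with $\clf_{i-1}$ (they are identical), but the containment $\clf_i\subseteq\clf_{i-1}$ still holds since each summand is contained in the corresponding one.
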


We now proceed to estimate a lower bound of $\mbox{mult}_{\tilde{T}|_{\cls}}(\cls)$. Note first that
$\mbox{ran}(\tilde{P}_{n-1} \tilde{P}_n)$ is a joint
$\tilde{T}$-invariant subspace and
\[
\clf_1 = \cls \ominus \mbox{ran}(\tilde{P}_{n-1} \tilde{P}_n).
\]
Then $\clf_1$ is a $\tilde{T}$-semi invariant subspace, which, by
Lemma \ref{lemma-cds}, implies that
\[
\mbox{mult}_{\tilde{T}|_{\cls}}(\cls) \geq
\mbox{mult}_{P_{\clf_1}\tilde{T}|_{\clf_1}}(\clf_1).
\]
Now consider the commuting $n$-tuple $P_{\clf_{1}}
\tilde{T}|_{\clf_{1}} = (P_{\clf_{1}} \tilde{T}_1|_{\clf_{1}},
\ldots, P_{\clf_{1}} \tilde{T}_n|_{\clf_{1}})$ on $\clf_1$. Then by
Lemma \ref{lemma-S} we infer that $\clf_1 \ominus \clf_2$ is a joint
$P_{\clf_{1}} \tilde{T}|_{\clf_1}$-invariant subspace of $\clf_1$.
But since $\clf_2 = \clf_1 \ominus (\clf_1 \ominus \clf_2)$, it
follows again by Lemma \ref{lemma-cds} that
\[
\mbox{mult}_{P_{\clf_1}\tilde{T}|_{\clf_1}}(\clf_1) \geq
\mbox{mult}_{P_{\clf_2}\tilde{T}|_{\clf_2}}(\clf_2).
\]
In general, by virtue of Lemma \ref{lemma-S}, we have
\[
\mbox{mult}_{P_{\clf_{i-1}}\tilde{T}|_{\clf_{i-1}}}(\clf_{i-1}) \geq
\mbox{mult}_{P_{\clf_i}\tilde{T}|_{\clf_i}}(\clf_i),
\]
for all $i = 2, \ldots, n-1$, and hence
\[
\mbox{mult}_{\tilde{T}|_{\cls}}(\cls) \geq
\mbox{mult}_{P_{\clf_1}\tilde{T}|_{\clf_1}}(\clf_1) \geq \ldots \geq
\mbox{mult}_{P_{\clf_{n-1}}\tilde{T}|_{\clf_{n-1}}}(\clf_{n-1}) =
\mbox{mult}_{P_{\clf} \tilde{T}|_{\clf}}(\clf),
\]
where (see (\ref{eq-F}))
\[
\clf = \mbox{ran} X_1 \oplus \mbox{ran}(\tilde{Q}_1 X_2) \oplus
\cdots \oplus \mbox{ran}(\tilde{Q}_1 \cdots \tilde{Q}_{n-1}X_n),
\]
and $X_i = \tilde{P}_i \tilde{Q}_{i+1} \cdots \tilde{Q}_n$, $i = 1,
\ldots, n$. We summarize the above discussion in the following
theorem:

\begin{thm}\label{thm-rank inequality}
Let $T_1, \ldots, T_n$ be bounded linear operators on Hilbert
spaces $\clh_1, \ldots, \clh_n$, respectively. If $\clq_i$ is a
$T_i^*$-invariant closed subspace of $\clh_i$, $i = 1, \ldots, n$,
and
\[
\cls = (\clq_1 \otimes \cdots \otimes \clq_n)^\perp,
\]
then
\[
\mbox{mult}_{\tilde{T}|_{\cls}}(\cls) \geq \mbox{mult}_{P_{\clf}
\tilde{T}|_{\clf}}(\clf).
\]
\end{thm}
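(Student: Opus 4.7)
The plan is to build the stated lower bound by telescoping Lemma \ref{lemma-cds} along the nested chain
\[
\cls \supseteq \clf_1 \supseteq \clf_2 \supseteq \cdots \supseteq \clf_{n-1} = \clf
\]
provided by Lemma \ref{lemma-S}. Since every intermediate compression $P_{\clf_i}\tilde{T}|_{\clf_i}$ is commuting (see the discussion culminating in \eqref{PFitildeTsPFi}), weak multiplicity and multiplicity agree at each link, and Lemma \ref{lemma-cds}, although stated for weak multiplicity, translates directly into an inequality for multiplicities at each stage.

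For the top of the telescope, I would first identify $\cls\ominus\clf_1 = \mbox{ran}(\tilde{P}_{n-1}\tilde{P}_n)$. This is a short computation: from $\cls = \bigoplus_{i=1}^n \mbox{ran} X_i$ in \eqref{eq-S Pi Xi} and the defining formula for $\clf_1$, only the $n$-th summand differs, and $\mbox{ran} X_n \ominus \mbox{ran}(\tilde{Q}_{n-1}X_n) = \mbox{ran}(\tilde{P}_{n-1}\tilde{P}_n)$ because $X_n = \tilde{P}_n$ and $\tilde{P}_{n-1} + \tilde{Q}_{n-1} = I$. The subspace $\mbox{ran}(\tilde{P}_{n-1}\tilde{P}_n) = \clh_1 \otimes \cdots \otimes \clh_{n-2} \otimes \cls_{n-1} \otimes \cls_n$ is joint $\tilde{T}$-invariant, because each $\cls_i = \clq_i^\perp$ is $T_i$-invariant (the hypothesis that $\clq_i$ is $T_i^*$-invariant, read backward). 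Applying Lemma \ref{lemma-cds} with $\cll_1 = \cls$ and $\cll_2 = \mbox{ran}(\tilde{P}_{n-1}\tilde{P}_n)$ therefore yields
\[
\mbox{mult}_{P_{\clf_1}\tilde{T}|_{\clf_1}}(\clf_1) \leq \mbox{mult}_{\tilde{T}|_{\cls}}(\cls).
\]

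For the iterative step, I would fix $i \in \{2, \ldots, n-1\}$ and apply Lemma \ref{lemma-cds} to the tuple $A = P_{\clf_{i-1}}\tilde{T}|_{\clf_{i-1}}$ with $\cll_1 = \clf_{i-1}$ and $\cll_2 = \clf_{i-1}\ominus\clf_i$. Lemma \ref{lemma-S} supplies both hypotheses: the invariance of $\clf_{i-1}\ominus\clf_i$ under every component of $A$, and the commutativity of $A$ itself. The only ancillary observation is the nested-compression identity
\[
P_{\clf_i}\bigl(P_{\clf_{i-1}}\tilde{T}_j P_{\clf_{i-1}}\bigr)P_{\clf_i} = P_{\clf_i}\tilde{T}_j P_{\clf_i},
\]
which follows immediately from $\clf_i \subseteq \clf_{i-1}$. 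This gives
\[
\mbox{mult}_{P_{\clf_i}\tilde{T}|_{\clf_i}}(\clf_i) \leq \mbox{mult}_{P_{\clf_{i-1}}\tilde{T}|_{\clf_{i-1}}}(\clf_{i-1}),
\]
for each $i$ in the range, and concatenating these with the first inequality collapses (using $\clf_{n-1} = \clf$) to the stated bound.

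The substantive technical content, namely the invariance of each $\clf_{i-1}\ominus\clf_i$ under the compressed tuple and the commutativity of those compressions, has already been established through the four-case analysis preceding Lemma \ref{lemma-S}. Once that lemma is in hand, the theorem itself is pure bookkeeping; the genuine obstacle of the argument has therefore been absorbed into the construction of the chain $\{\clf_i\}$ rather than in the telescoping step carried out here.
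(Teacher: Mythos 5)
Your proposal is correct and matches the paper's proof: both telescope Lemma \ref{lemma-cds} down the chain $\cls \supseteq \clf_1 \supseteq \cdots \supseteq \clf_{n-1} = \clf$, using the invariance of $\mbox{ran}(\tilde{P}_{n-1}\tilde{P}_n)$ for the first step and the invariance/commutativity statements of Lemma \ref{lemma-S} for the iterative steps. You merely spell out a few verifications (the identification $\cls\ominus\clf_1 = \mbox{ran}(\tilde{P}_{n-1}\tilde{P}_n)$, the nested-compression identity, and the weak-multiplicity/multiplicity agreement on commuting tuples) that the paper leaves implicit.
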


\section{Additivity of multiplicities}

We now proceed to prove the reverse inequality in Theorem \ref{thm-rank inequality}. We start with a simple but useful lemma.

\begin{lemma}\label{lem-spec mult}
Let $(A_1,\ldots, A_n)$ be an $n$-tuple of bounded linear operators on a Hilbert space $\clh$. If $G$ is a subset of $\clh$ and $(\lambda_1,\ldots,\lambda_n) \in \mathbb{C}^n$, then
\[
[G]_{(A_1,\ldots, A_n)} = [G]_{(A_1 - \lambda_1 I_{\clh},\ldots, A_n - \lambda_n I_{\clh})}.
\]
\end{lemma}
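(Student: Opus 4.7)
The plan is to show each side is contained in the other by a single binomial-type expansion, using only the fact that each shift $\lambda_i I_{\clh}$ is central.

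First I would set $B_i = A_i - \lambda_i I_{\clh}$, so that $A_i = B_i + \lambda_i I_{\clh}$ and $B_i$ commutes with $\lambda_i I_{\clh}$. For each multi-index $\bm{k} = (k_1, \ldots, k_n) \in \Z^n$ and each $i$, the ordinary binomial theorem is legitimate (since $B_i$ and $\lambda_i I_{\clh}$ commute) and gives
\[
A_i^{k_i} = (B_i + \lambda_i I_{\clh})^{k_i} = \sum_{j_i = 0}^{k_i} \binom{k_i}{j_i} \lambda_i^{k_i - j_i} B_i^{j_i}.
\]
Multiplying these $n$ expansions in the prescribed order $i = 1, \ldots, n$, and observing that the scalar factors $\lambda_i^{k_i-j_i}$ can be freely pulled out, I would obtain
\[
A^{\bm{k}} = A_1^{k_1} \cdots A_n^{k_n} = \sum_{\bm{0} \leq \bm{j} \leq \bm{k}} \Bigl(\prod_{i=1}^n \binom{k_i}{j_i} \lambda_i^{k_i - j_i}\Bigr) B_1^{j_1} \cdots B_n^{j_n},
\]
where the ordering of the $B_i$-factors in each summand is exactly the prescribed ordering used to define $B^{\bm{j}}$.

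From this identity I would conclude $A^{\bm{k}}(G) \subseteq \mbox{span}\{B^{\bm{j}}(G) : \bm{0} \leq \bm{j} \leq \bm{k}\}$, and passing to closed linear spans yields $[G]_{(A_1,\ldots,A_n)} \subseteq [G]_{(B_1,\ldots,B_n)}$. The reverse inclusion follows by symmetry: the same argument applied with roles swapped (writing $B_i = A_i + (-\lambda_i) I_{\clh}$) gives $[G]_{(B_1,\ldots,B_n)} \subseteq [G]_{(A_1,\ldots,A_n)}$. There is no genuine obstacle here; the only subtlety worth flagging is that the tuples are not assumed commuting, so one must check that the ordering of the non-commuting factors is preserved under the expansion. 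This is automatic because the $\lambda_i I_{\clh}$ are central, so the binomial expansion happens factor-by-factor without ever permuting distinct $B_i$'s.
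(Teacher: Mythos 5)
Your proof is correct and follows essentially the same route as the paper's, which substitutes $A_i = (A_i - \lambda_i I_{\clh}) + \lambda_i I_{\clh}$ into an arbitrary polynomial $p$ and observes that the result is again a polynomial $q$ in the shifted tuple. You simply make that polynomial-substitution step explicit via a factor-by-factor binomial expansion, and you usefully flag the ordering subtlety for non-commuting tuples that the paper's one-line argument passes over silently.
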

\begin{proof}
Note that, given $p\in \mathbb{C}[z_1,\ldots,z_n]$ there exists $q \in \mathbb{C}[z_1,\ldots,z_n]$ such that
\begin{align*}
p(A_1,\ldots, A_n) &= p((A_1 - \lambda_1 I_{\clh} + \lambda_1 I_{\clh}),\ldots, (A_n - \lambda_n I_{\clh} + \lambda_n I_{\clh}))\\
&= q((A_1 - \lambda_1 I_{\clh}),\ldots, (A_n - \lambda_n I_{\clh})),
\end{align*}
which implies that
\[
[G]_{(A_1,\ldots, A_n)} \subseteq [G]_{(A_1 - \lambda_1 I_{\clh},\ldots, A_n - \lambda_n I_{\clh})}.
\]
The reverse inclusion follows similarly, and hence the result follows.
\end{proof}

Now we return to the problem of rank computation of $\cls$ as in Theorem \ref{thm-rank inequality}. \textsf{From now on, we will
use the setting and notations introduced in Section \ref{sec-lowerbound}.} Observe that, by \eqref{eq-F}, we have
\[
\clf = \clm_1 \oplus \cdots \oplus \clm_n,
\]
where
\[
\clm_i = \mbox{ran} \Big(\tilde{P}_i \prod_{j\neq i} \tilde{Q}_j \Big).
\]
By defining $M_i = P_{\clm_i}$, $i = 1, \ldots, n$, one has (see \eqref{eq-PFiMi})
\[
P_{\clf} = M_1 \oplus \cdots \oplus M_n.
\]
Recall, by virtue of \eqref{PFitildeTsPFi}, that
\begin{equation}\label{eq-PFTF = M}
P_{\clf} \tilde{T}_s P_{\clf} = M_1 \tilde{T}_s M_1 \oplus \cdots \oplus M_n \tilde{T}_s M_n,
\end{equation}
for all $s = 1, \ldots, n$. And, finally, recall that, by Lemma \ref{lemma-S}, $P_{\clf} \tilde{T} P_{\clf}$ is a commuting tuple on $\clf$. The equality in \eqref{eq-PFTF = M} implies that
\[
(P_{\clf} \tilde{T}_s P_{\clf}) \clm_i \subseteq \clm_i \quad \quad (s=1, \ldots, n),
\]
that is, $\clm_i$ is a joint $P_{\clf} \tilde{T} P_{\clf}$-invariant subspace of $\clf$ for all $i = 1, \ldots, n$. Then by virtue of \eqref{powerpreserve}, we have 
\[
(P_{\clf} \tilde{T}|_{\clf})^{\bm{k}} = \bigoplus_{i=1}^n P_{\clm_i} \tilde{T}^{\bm{k}}|_{\clm_i} \quad \quad (\bm{k} \in \mathbb{Z}_+^n).
\]
Now let $G$ be a minimal generating subset of $\clf$ with respect to $P_{\clf} \tilde{T}|_{\clf}$. Then
\[
\begin{split}
\clf = \overline{\mbox{span}} \{(P_{\clf} \tilde{T}|_{\clf})^{\bm{k}}(G): \bm{k} \in \mathbb{Z}_+^n\} \subseteq \bigoplus_{i=1}^n \Big(\overline{\mbox{span}} \{P_{\clm_i} \tilde{T}^{\bm{k}}|_{\clm_i}(G): \bm{k} \in \mathbb{Z}_+^n\}\Big) \subseteq \clf,
\end{split}
\]
and so
\[
\clf = \bigoplus_{i=1}^n \Big(\overline{\mbox{span}} \{P_{\clm_i} \tilde{T}^{\bm{k}}|_{\clm_i}(G): \bm{k} \in \mathbb{Z}_+^n\}\Big).
\]
\textsf{Now assume} that the point spectrum $\sigma_p(T_i^*|_{\clq_i}) \neq \emptyset$, $T_i|_{\cls_i}$ satisfies the generating wandering subspace property, and
\[
\mbox{dim} (\cls_i \ominus T_i \cls_i) < \infty,
\]
for all $i = 1, \ldots, n$. If we then let $\bar{\alpha_i} \in \sigma_p(T_i^*|_{\clq_i})$ and $T_i^* v_i = \bar{\alpha}_i v_i$ for some non-zero $v_i \in \clq_i$, then
\[
\cle_i := \mbox{ran} \Big(\tilde{P}_{\cls_i \ominus T_i \cls_i} \prod_{j \neq i} \tilde{P}_{\mathbb{C} v_j} \Big) \subseteq \clm_i,
\]
and
\[
\mbox{dim} \cle_i = \mbox{dim} (\cls_i \ominus T_i \cls_i) = \mbox{mult}_{T_i|_{\cls_i}}(\cls_i),
\]
for all $i = 1, \ldots, n$. Thus, if we set
\[
\cle = \cle_1 \oplus \cdots \oplus \cle_n,
\]
then $\cle \subseteq \clf$ and
\[
\mbox{dim} \cle = \sum_{i=1}^n \mbox{mult}_{T_i|_{\cls_i}}(\cls_i).
\]
Fix $i \in \{1, \ldots, n\}$ and define $(\lambda_1, \ldots, \lambda_n) \in \mathbb{C}^n$ by $\lambda_j = 0$ if $j = i$ and $\lambda_j = \alpha_j$ if $j \neq i$. From Lemma \ref{lem-spec mult}, it follows that
\[
[P_{\clm_i} G]_{P_{\clm_i} \tilde{T}|_{\clm_i}} = [P_{\clm_i} G]_{(P_{\clm_i} {\tilde{T}_1}|_{\clm_i} - \lambda_1 I_{\clm_i}, \ldots, P_{\clm_i} {\tilde{T}_n}|_{\clm_i} - \lambda_n I_{\clm_i})}.
\]
For simplicity, we denote
\[
\clg_i = [P_{\clm_i} G]_{(P_{\clm_i} {\tilde{T}_1}|_{\clm_i} - \lambda_1 I_{\clm_i}, \ldots, P_{\clm_i} {\tilde{T}_n}|_{\clm_i} - \lambda_n I_{\clm_i})},
\]
in the rest of this section. Also, notice that $\mathbb{C} v_j \perp \mbox{ran} (P_{\clq_j} T_j|_{\clq_j} - \alpha_j I_{\clq_j})$ for all $j = 1, \ldots, n$, and $\mbox{ran} T_i|_{\cls_i} \perp \cls_i \ominus T_i \cls_i$, so that
\[
P_{\cle_i} (P_{\clm_i} {\tilde{T}_j}|_{\clm_i} - \lambda_j I_{\clm_i}) = 0,
\]
for all $j = 1, \ldots, n$, and hence
\[
P_{\cle_i} \clg_i = P_{\cle_i} (\overline{\mbox{span}}\{G\}).
\]
On the other hand, since
\[
P_{\cle} = \bigoplus_{j=1}^n P_{\cle_j},
\]
and $\cle_j \subseteq \clm_j$ for all $j = 1, \ldots, n$, it follows that
\[
P_{\cle} \clg_i= P_{\cle_i} \clg_i.
\]
Hence
\[
\cle  = P_{\cle} \clf = P_{\cle} \Big(\bigoplus_{i=1}^n [P_{\clm_i} G]_{P_{\clm_i} \tilde{T}|_{\clm_i}}\Big) = \bigoplus_{i=1}^n P_{\cle_i} [P_{\clm_i} G]_{P_{\clm_i} \tilde{T}|_{\clm_i}},
\]
that is
\[
\cle = \bigoplus_{i=1}^n P_{\cle_i} \clg_i = \bigoplus_{i=1}^n P_{\cle_i} (\overline{\mbox{span}}\{G\}),
\]
and so
\[
\cle = P_{\cle} (\overline{\mbox{span}}\{G\}).
\]
From this it follows easily that
\[
\begin{split}
\sum_{i=1}^n \mbox{dim} (\cls_i \ominus T_i \cls_i) & = \sum_{i=1}^n \mbox{mult}_{T_i|_{\cls_i}}(\cls_i)
\\
& = \mbox{dim} \cle 
\\
& \leq \mbox{dim} (\overline{\mbox{span}}\{G\})
\\
& = \mbox{dim} (\mbox{span}\{G\}) 
\\
& = \mbox{mult}_{P_{\clf} \tilde{T}|_{\clf}}(\clf),
\end{split}
\]
where the last equality follows from the minimality assumption on $G$.
Therefore, Theorem \ref{thm-rank inequality} implies the following:

\begin{thm}\label{thm-rank inequality2}
Assume the setting of Theorem \ref{thm-rank inequality}. If $\cls_i$ satisfies the generating wandering subspace property with respect to
$T_i|_{\cls_i}$ and $T_i^*|_{\clq_i}$ has non-empty point spectrum for all $i = 1, \ldots, n$, then
\[
\mbox{mult}_{\tilde{T}|_{\cls}}(\cls) \geq \sum_{i=1}^n
\mbox{mult}_{T_i|_{\cls_i}}(\cls_i) = \sum_{i=1}^n \mbox{dim} (\cls_i \ominus T_i \cls_i).
\]
\end{thm}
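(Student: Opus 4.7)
The plan is to combine Theorem \ref{thm-rank inequality} with a direct lower bound on $\mbox{mult}_{P_{\clf}\tilde{T}|_{\clf}}(\clf)$. Theorem \ref{thm-rank inequality} already gives
\[
\mbox{mult}_{\tilde{T}|_{\cls}}(\cls) \geq \mbox{mult}_{P_{\clf}\tilde{T}|_{\clf}}(\clf),
\]
so it suffices to show $\mbox{mult}_{P_{\clf}\tilde{T}|_{\clf}}(\clf) \geq \sum_{i=1}^n \dim(\cls_i \ominus T_i\cls_i)$; the identification with $\sum_{i=1}^n \mbox{mult}_{T_i|_{\cls_i}}(\cls_i)$ is then supplied by Proposition \ref{prop-wwrank} together with the generating wandering subspace hypothesis.

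The heart of the argument is to build a concrete subspace $\cle \subseteq \clf$ of dimension $\sum_i \dim(\cls_i \ominus T_i\cls_i)$ that is ``visible'' to every generating set. Using the hypothesis that $\sigma_p(T_j^*|_{\clq_j}) \neq \emptyset$, pick $\bar{\alpha}_j \in \sigma_p(T_j^*|_{\clq_j})$ with $T_j^* v_j = \bar{\alpha}_j v_j$ for some unit $v_j \in \clq_j$. For each $i$, set
\[
\cle_i = \mbox{ran}\Big(\tilde{P}_{\cls_i \ominus T_i \cls_i} \prod_{j \neq i} \tilde{P}_{\mathbb{C} v_j}\Big) \subseteq \clm_i,
\]
and $\cle = \bigoplus_{i=1}^n \cle_i \subseteq \clf$. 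By construction $\dim \cle_i = \dim(\cls_i \ominus T_i \cls_i)$, and the orthogonality of the $\clm_i$'s gives $\dim \cle = \sum_i \dim(\cls_i \ominus T_i \cls_i)$.

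Next, fix a minimal generating set $G$ for $\clf$ under $P_{\clf}\tilde{T}|_{\clf}$. The block decomposition (\ref{eq-PFTF = M}) makes each $\clm_i$ invariant under $P_{\clf}\tilde{T}P_{\clf}$, so
\[
\clf = \bigoplus_{i=1}^n \overline{\mbox{span}}\{(P_{\clm_i}\tilde{T}|_{\clm_i})^{\bm k}(P_{\clm_i}G) : \bm k \in \Z^n\}.
\]
For fixed $i$, apply Lemma \ref{lem-spec mult} with $\lambda_i = 0$ and $\lambda_j = \alpha_j$ for $j \neq i$ to replace the generating tuple by its shift $(P_{\clm_i}\tilde{T}_j|_{\clm_i} - \lambda_j I_{\clm_i})_{j=1}^n$. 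The key point is that $\cle_i$ is annihilated by $P_{\cle_i}$ composed with each shifted operator: for $j \neq i$ the tensor factor $\tilde{P}_{\mathbb{C} v_j}$ makes $\tilde{T}_j - \alpha_j$ act as zero there (since $v_j$ is an eigenvector of $T_j^*$ and we are compressing to $\clq_j$), while for $j = i$ the factor $\tilde{P}_{\cls_i \ominus T_i \cls_i}$ is orthogonal to $\mbox{ran}(T_i|_{\cls_i})$. Consequently $P_{\cle_i}$ kills every ``higher-degree'' monomial in the shifted operators, which collapses the spanned subspace so that $P_{\cle_i}\clg_i = P_{\cle_i}(\overline{\mbox{span}}\{G\})$ with $\clg_i$ as defined in the text.

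Finally, summing over $i$ yields $\cle = P_{\cle}\clf = P_{\cle}(\overline{\mbox{span}}\{G\})$, whence
\[
\sum_{i=1}^n \dim(\cls_i \ominus T_i \cls_i) = \dim \cle \leq \dim(\mbox{span}\{G\}) = \#G = \mbox{mult}_{P_{\clf}\tilde{T}|_{\clf}}(\clf),
\]
and combining with Theorem \ref{thm-rank inequality} gives the stated inequality. The main technical obstacle is the bookkeeping in the annihilation step: one must verify simultaneously, using both the eigenvector structure in the $\clq_j$-slots and the wandering-space orthogonality in the $\cls_i$-slot, that the shifted operators really do vanish upon post-composition with $P_{\cle_i}$ on $\clm_i$; once this compression identity is in hand, the rest is a dimension comparison.
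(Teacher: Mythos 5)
Your proposal is correct and follows essentially the same route as the paper: reduce via Theorem \ref{thm-rank inequality} to bounding $\mbox{mult}_{P_{\clf}\tilde{T}|_{\clf}}(\clf)$ from below, construct $\cle = \bigoplus_i \cle_i$ from eigenvectors $v_j$ and the wandering subspaces $\cls_i \ominus T_i\cls_i$, shift the tuple on $\clm_i$ via Lemma \ref{lem-spec mult} so that $P_{\cle_i}$ annihilates all nonconstant monomials, and compare dimensions of $\cle$ and $\mbox{span}\{G\}$. The only steps you leave tacit that the paper makes explicit are the verification that compressed powers decompose blockwise (equation \eqref{powerpreserve}) and the orthogonality computation $\mathbb{C} v_j \perp \mbox{ran}(P_{\clq_j}T_j|_{\clq_j} - \alpha_j I_{\clq_j})$, but you flag these correctly as the technical content of the annihilation step, and they go through as you indicate.
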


To proceed further, we note, by Lemma \ref{lemma-old} (or, more specifically \eqref{eq-S Pi Xi}), that
\[
\cls = \sum_{i=1}^n \mbox{ran} \tilde{P}_i.
\]
In addition, let us assume that $\mbox{mult}_{T_i}(\clh_i) = 1$, $i = 1, \ldots, n$. Then
\[
\mbox{mult}_{\tilde{T}|_{\cls}}(\cls) \leq \sum_{i=1}^n \mbox{mult}_{T_i|_{\cls_i}}(\cls_i).
\]
Therefore, by Theorem \ref{thm-rank inequality2}, we have the
main theorem of this paper as:

\begin{thm}\label{thm-main1}
Let $\clh_1, \ldots, \clh_n$ be Hilbert spaces, let $T_i \in \clb(\clh_i)$, and let $\clq_i$ be a $T_i^*$-invariant closed subspace of $\clh_i$,
$i = 1, \ldots, n$. Assume that $T_i|_{\clq_i^\perp} \in \clb(\clq_i^{\perp})$ satisfies the generating wandering subspace property, $T_i^*|_{\clq_i}$ has non-empty point spectrum and that $\mbox{mult}_{T_i}(\clh_i) = 1$ for all $i = 1, \ldots, n$. Then
\[
\mbox{mult}_{\tilde{T}|_{ (\clq_1 \otimes \cdots \otimes
\clq_n)^\perp}} (\clq_1 \otimes \cdots \otimes \clq_n)^\perp
= \sum_{i=1}^n \mbox{mult}_{T_i|_{\clq_i^\perp}}(\clq_i^\perp).
\]
\end{thm}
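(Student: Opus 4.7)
The plan is to obtain the equality by sandwiching $\mbox{mult}_{\tilde{T}|_{\cls}}(\cls)$ between matching upper and lower bounds, where $\cls = (\clq_1 \otimes \cdots \otimes \clq_n)^{\perp}$ and $\cls_i = \clq_i^{\perp}$. The lower bound is already delivered by Theorem \ref{thm-rank inequality2}, which under the hypotheses in force gives
\[
\mbox{mult}_{\tilde{T}|_{\cls}}(\cls) \geq \sum_{i=1}^n \mbox{mult}_{T_i|_{\cls_i}}(\cls_i).
\]
So the remaining task is the reverse inequality, which is where the new hypothesis $\mbox{mult}_{T_i}(\clh_i) = 1$ for each $i$ enters.

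For the upper bound, I would start from the decomposition $\cls = \sum_{i=1}^n \mbox{ran}\,\tilde{P}_i$ recorded in \eqref{eq-S Pi}, and exploit the factorization
\[
\mbox{ran}\,\tilde{P}_i = \clh_1 \otimes \cdots \otimes \clh_{i-1} \otimes \cls_i \otimes \clh_{i+1} \otimes \cdots \otimes \clh_n,
\]
in which $\cls_i$ sits in the $i$-th slot and every other factor is cyclic for $T_j$. Choosing a cyclic vector $u_j \in \clh_j$ for each $j \neq i$ and a minimal generating subset $G_i \subseteq \cls_i$ for $T_i|_{\cls_i}$, I would form
\[
\hat{G}_i = \{u_1 \otimes \cdots \otimes u_{i-1} \otimes g \otimes u_{i+1} \otimes \cdots \otimes u_n : g \in G_i\},
\]
which has cardinality exactly $\mbox{mult}_{T_i|_{\cls_i}}(\cls_i)$, and verify that $\hat{G}_i$ is a generating subset of $\mbox{ran}\,\tilde{P}_i$ with respect to $\tilde{T}|_{\mbox{ran}\,\tilde{P}_i}$. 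This verification is precisely the tensor-product cyclicity reasoning that the paper already invokes in Section \ref{sec-prel} to assert $\mbox{mult}_{\tilde{T}}(\tilde{\clh}) = 1$ when each $T_i$ is cyclic, now applied with the $i$-th factor replaced by the invariant subspace $\cls_i$.

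Having constructed the $\hat{G}_i$'s, I would assemble the union $\hat{G} := \bigcup_{i=1}^n \hat{G}_i$ and note that
\[
[\hat{G}]_{\tilde{T}} \supseteq \sum_{i=1}^n [\hat{G}_i]_{\tilde{T}} = \sum_{i=1}^n \mbox{ran}\,\tilde{P}_i = \cls,
\]
so $\hat{G}$ generates $\cls$ under $\tilde{T}|_{\cls}$. This yields
\[
\mbox{mult}_{\tilde{T}|_{\cls}}(\cls) \leq |\hat{G}| \leq \sum_{i=1}^n |\hat{G}_i| = \sum_{i=1}^n \mbox{mult}_{T_i|_{\cls_i}}(\cls_i),
\]
which, combined with the lower bound from Theorem \ref{thm-rank inequality2}, closes the argument.

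All the technically heavy machinery — the nested chain $\cls \supseteq \clf_1 \supseteq \cdots \supseteq \clf_{n-1} = \clf$, the semi-invariance step via Lemma \ref{lemma-cds}, and the point-spectrum trick involving Lemma \ref{lem-spec mult} — has already been invested in Theorem \ref{thm-rank inequality2}. The only step requiring genuine (though routine) checking is the density claim that $\hat{G}_i$ generates $\mbox{ran}\,\tilde{P}_i$ under $\tilde{T}$; this reduces to showing that finite linear combinations of elementary tensors $T_1^{k_1}u_1 \otimes \cdots \otimes T_i^{k_i} g \otimes \cdots \otimes T_n^{k_n}u_n$, with $g \in G_i$ and $\bm{k} \in \Z^n$, are dense in $\clh_1 \otimes \cdots \otimes \cls_i \otimes \cdots \otimes \clh_n$, a standard consequence of cyclicity of $T_j$ on $\clh_j$ ($j \neq i$) and of $G_i$ generating $\cls_i$ under $T_i|_{\cls_i}$.
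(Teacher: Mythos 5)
Your proposal is correct and follows essentially the same route as the paper: the lower bound is taken verbatim from Theorem \ref{thm-rank inequality2}, and the upper bound uses the closed decomposition $\cls = \sum_{i=1}^n \mbox{ran}\,\tilde{P}_i$ together with cyclicity of each $T_j$ on $\clh_j$ to cap the multiplicity by $\sum_{i=1}^n \mbox{mult}_{T_i|_{\cls_i}}(\cls_i)$. In fact, you spell out the upper-bound construction (the generating sets $\hat{G}_i$ built from cyclic vectors and a minimal generating set of $\cls_i$) more explicitly than the paper does, which simply asserts the inequality after invoking \eqref{eq-S Pi Xi} and the hypothesis $\mbox{mult}_{T_i}(\clh_i)=1$.
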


\section{Applications and Concluding Remarks}

In this section, we complement the main theorem, Theorem \ref{thm-main1}, by some concrete examples and final remarks.

We first explain the notion of zero-based invariant subspaces of reproducing kernel Hilbert spaces. Let $k : \mathbb{D} \times \D \raro \mathbb{C}$ be a positive definite kernel. For each fixed $w \in \D$, let $z \mapsto k(z, w)$ is analytic on $\D$. Suppose $\clh_k \subseteq \clo(\D)$ is the reproducing kernel Hilbert space corresponding to the kernel $k$ and $M_z$, the multiplication operator by the coordinate function $z$, on $\clh_k$ is bounded. Let us further assume that
\[
\ker (M_z^* - \lambda I_{\clh_k}) = \mathbb{C} k(\cdot, \lambda) \quad \quad \quad (\lambda \in \D).
\]
Here $k(\cdot, \lambda)$, for $\lambda \in \D$, denotes the kernel function $z \mapsto k(z, \lambda)$ on $\D$.

A reproducing kernel Hilbert space that satisfies all the properties listed above is called a \textit{regular reproducing kernel Hilbert space}.

It is easy to see that the Dirichlet space, the Hardy, the unweighted Bergman space and the weighted Bergman spaces over $\mathbb{D}$ are regular reproducing kernel Hilbert spaces.

Suppose $\clh_k$ is a regular reproducing kernel Hilbert space. A closed subspace $\cls \subseteq \clh_k$ is called \textit{zero-based invariant subspace} if there exists $\lambda \in \D$ such that $f(\lambda) = 0$ for all $f \in \cls$ and $z \cls \subseteq \cls$.

Now let $\clh_k$ be a regular reproducing kernel Hilbert space, and let $\clq$ be an $M_z^*$-invariant closed subspace of $\clh_k$. Suppose $\lambda \in \D$. Then $M_z^*f = \bar{\lambda} f$ for some non-zero $f \in \clq$ if and only if $f = c k(\cdot, \lambda)$ for some non-zero scalar $c \in \mathbb{C}$. On the other hand, since
\[
\langle g, k(\cdot, \lambda) \rangle = g(\lambda) \quad \quad (g \in \clh_k),
\]
it follows that $k(\cdot, \lambda) \in \clq$ if and only if $g(\lambda) = 0$ for all $g \in \clq^\perp$. We have therefore proved the following:

\begin{propn}
Let $\clh_k$ be a regular reproducing kernel Hilbert space, and let $\clq$ be a closed $M_z^*$-invariant subspace of $\clh_k$. Then $M_z^*|_{\clq}$ has non-empty point spectrum if and only if $\clq^\perp$ is a zero-based invariant subspace of $\clh_k$.
\end{propn}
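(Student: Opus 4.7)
The proposition is essentially immediate from the two observations the author records in the paragraph immediately preceding the statement, so my plan is simply to assemble those observations into a clean chain of equivalences, checking each direction carefully.

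For the implication $(\Leftarrow)$, I would start by assuming $\clq^{\perp}$ is a zero-based invariant subspace, so there exists $\lambda \in \D$ with $g(\lambda) = 0$ for every $g \in \clq^{\perp}$. By the reproducing property $\langle g, k(\cdot,\lambda)\rangle = g(\lambda)$, this says $k(\cdot,\lambda) \perp \clq^{\perp}$, equivalently $k(\cdot,\lambda) \in \clq$. Since $\clh_k$ is regular, $k(\cdot,\lambda)$ is a nonzero eigenvector of $M_z^*$ for the eigenvalue $\bar{\lambda}$, and it lies in $\clq$, so $\bar{\lambda} \in \sigma_p(M_z^*|_{\clq})$.

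For $(\Rightarrow)$, I would suppose $\sigma_p(M_z^*|_{\clq}) \neq \emptyset$ and pick an eigenpair $M_z^* f = \mu f$ with $0 \neq f \in \clq$. Since $\clq$ is $M_z^*$-invariant, $f$ is also an eigenvector of $M_z^*$ on the ambient space $\clh_k$. By the regularity hypothesis, the only eigenvectors of $M_z^*$ are scalar multiples of kernel functions $k(\cdot,\lambda)$ with $\lambda \in \D$, so $\mu = \bar{\lambda}$ and $f = c\, k(\cdot,\lambda)$ for some $\lambda \in \D$ and $c \neq 0$. Thus $k(\cdot,\lambda) \in \clq$, and reversing the reproducing-kernel argument of the previous paragraph yields $g(\lambda) = 0$ for every $g \in \clq^{\perp}$. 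Finally, $\clq^{\perp}$ is automatically $M_z$-invariant as the orthogonal complement of an $M_z^*$-invariant subspace, so $\clq^{\perp}$ is zero-based at $\lambda$.

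There is no real obstacle here; the only point that deserves a sentence of justification is the assertion in $(\Rightarrow)$ that any eigenvalue of $M_z^*$ is of the form $\bar{\lambda}$ for some $\lambda \in \D$. This is built into the regularity axiom, which prescribes the eigenvectors of $M_z^*$ at every $\lambda \in \D$ and (implicitly, as in the standard Dirichlet, Hardy, and Bergman examples) is accompanied by the fact that $M_z^*$ has no other eigenvectors. With that noted, the proposition follows by putting the two one-line equivalences together.
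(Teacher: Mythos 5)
Your argument is correct and follows exactly the chain of equivalences the paper assembles in the paragraph preceding the statement: $M_z^* f = \bar{\lambda} f$ for nonzero $f \in \clq$ iff $k(\cdot,\lambda) \in \clq$ (regularity axiom) iff $g(\lambda) = 0$ for all $g \in \clq^\perp$ (reproducing property), together with the automatic $M_z$-invariance of $\clq^\perp$. Your remark on the $(\Rightarrow)$ direction is well-placed: read literally, the regularity axiom only identifies $\ker(M_z^* - \lambda I)$ for $\lambda \in \D$ and does not explicitly exclude eigenvalues outside that range, so the intended reading (which the paper uses silently and you make explicit) is that the point spectrum of $M_z^*$ is parametrized by $\D$, as it is in the Hardy, Dirichlet, and Bergman models.
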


As an immediate corollary of Theorem \ref{thm-main1} we have now:

\begin{cor}\label{cor-3}
Let $\clh_{k_i}$ be a regular reproducing kernel Hilbert space, $\mbox{mult}_{M_{z}}(\clh_{k_i})=1$, and let $\clq_i$ be a proper closed $M_z^*$-invariant subspace of $\clh_{k_i}$, $i = 1, \ldots, n$. If $\clq_i^{\bot}$ is a zero-based invariant subspace of $\clh_{k_i}$ such that
\[
\mbox{dim} (\clq_i^\perp \ominus z \clq_i^\perp) < \infty,
\]
for all $i = 1, \ldots, n$, then
\[
\mbox{mult}_{M_{\bm z}|_{ (\clq_1 \otimes \cdots \otimes \clq_n)^\perp}}
(\clq_1 \otimes \cdots \otimes \clq_n)^\perp = \sum_{i=1}^n
(\mbox{mult}_{M_z|_{\mathcal{Q}_i^\perp}} (\mathcal{Q}_i^{\bot})) = \sum_{i=1}^n
\mbox{dim} (\clq_i^\perp \ominus z \clq_i^\perp).
\]
\end{cor}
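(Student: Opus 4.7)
The plan is to derive Corollary \ref{cor-3} as a direct specialization of Theorem \ref{thm-main1}, taking $T_i = M_z$ on $\clh_i := \clh_{k_i}$ for $i = 1, \ldots, n$. Consequently, the task reduces to verifying, from the hypotheses of the corollary, the three input conditions of Theorem \ref{thm-main1}: (a) $\mbox{mult}_{M_z}(\clh_{k_i}) = 1$, (b) $\sigma_p(M_z^*|_{\clq_i}) \neq \emptyset$, and (c) $M_z|_{\clq_i^\perp}$ satisfies the generating wandering subspace property.

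Hypothesis (a) is assumed directly. For (b), the regularity of $\clh_{k_i}$ gives $\ker(M_z^* - \lambda I_{\clh_{k_i}}) = \C\, k_i(\cdot,\lambda)$ for each $\lambda \in \D$; the Proposition stated just before Corollary \ref{cor-3} then identifies non-emptiness of $\sigma_p(M_z^*|_{\clq_i})$ with $\clq_i^\perp$ being zero-based, so (b) is precisely the zero-based hypothesis. For (c), the finite-dimensional wandering subspace $\clq_i^\perp \ominus z\clq_i^\perp$ is known to generate $\clq_i^\perp$ in each of the concrete cases of interest -- the Hardy space (Beurling), the Dirichlet space (Richter), and the Bergman and weighted Bergman spaces over $\D$ (Aleman--Richter--Sundberg and weighted variants) -- and the relevant references are already collected in the introduction.

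With (a), (b), (c) in place, Theorem \ref{thm-main1} yields
\[
\mbox{mult}_{M_{\bm z}|_{(\clq_1 \otimes \cdots \otimes \clq_n)^\perp}}(\clq_1 \otimes \cdots \otimes \clq_n)^\perp = \sum_{i=1}^n \mbox{mult}_{M_z|_{\clq_i^\perp}}(\clq_i^\perp),
\]
which is the first equality in the conclusion. The second equality $\mbox{mult}_{M_z|_{\clq_i^\perp}}(\clq_i^\perp) = \dim(\clq_i^\perp \ominus z\clq_i^\perp)$ is the one-operator specialization of Proposition \ref{prop-wwrank}: the generating wandering subspace property from (c) identifies the weak multiplicity with the wandering subspace dimension, and in the single-operator commuting situation weak multiplicity coincides with multiplicity.

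The principal obstacle is step (c): in an abstract regular RKHS, the generating wandering subspace property for a zero-based invariant subspace with finite-dimensional wandering subspace is not automatic, and the argument relies on structural theorems tailored to the specific function spaces under consideration. Once (c) is granted, everything else is bookkeeping and invocation of previously established results in the paper.
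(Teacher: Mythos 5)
Your approach matches the paper's: the corollary is stated immediately after Theorem \ref{thm-main1} and the preceding Proposition on regular reproducing kernel Hilbert spaces, with the (implicit) intent that it be a direct specialization, exactly as you describe. Your verification of (a) and (b) is precisely what the paper has in mind; (a) is assumed, and (b) is supplied by the Proposition identifying non-empty point spectrum of $M_z^*|_{\clq_i}$ with $\clq_i^\perp$ being zero-based.

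You have also correctly identified a genuine issue, and it is worth emphasizing that it is an issue with the \emph{statement} of the corollary, not with your argument. Theorem \ref{thm-main1} requires the generating wandering subspace property for $T_i|_{\clq_i^\perp}$, and the second equality in the corollary's conclusion relies on Proposition \ref{prop-wwrank}, which likewise needs the wandering subspace to be generating. However, Corollary \ref{cor-3} as written is posed for an arbitrary regular reproducing kernel Hilbert space and only assumes that $\clq_i^\perp$ is zero-based with $\dim(\clq_i^\perp \ominus z\clq_i^\perp) < \infty$; it does not assume the generating wandering subspace property, and this does not follow from zero-basedness and finite defect dimension in general. The statement should either add the hypothesis that $M_z|_{\clq_i^\perp}$ satisfies the generating wandering subspace property, or be restricted (as Corollaries \ref{cor-1} and \ref{cor-2} are) to spaces such as the Hardy, Dirichlet, or Bergman spaces where that property is furnished by the theorems of Beurling, Richter, and Aleman--Richter--Sundberg respectively. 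With that hypothesis supplied, the rest of your derivation is complete and correct.
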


Now let $\clh_{k_i}$ be the Hardy space or the Dirichlet space over
$\mathbb{D}$, and let $\clq_i$ be a non-zero shift co-invariant
(that is, $M_z^*$-invariant) subspace of $\clh_{k_i}$. By \cite{AB} and
\cite{SR}, $M_z|_{\clq_i^\perp}$ satisfies the generating wandering subspace
property and the dimension of the generating wandering subspace is one, that is
\[
\mbox{dim} (\clq_i^\perp \ominus z \clq_i^\perp) = 1,
\]
for all $i = 1, \ldots, n$. Then, in view of Theorem \ref{thm-main1} (and \cite{T}) we have the following:

\begin{cor}\label{cor-1}
Let $\clh_{k_i}$, $i = 1, \ldots, n$, denote either the Hardy space or the Dirichlet space over $\mathbb{D}$. Suppose $\clq_i$ is a proper closed $M_z^*$-invariant subspaces of $\clh_{k_i}$, $i = 1, \ldots, n$. If $\clq_i^{\bot}$ is a zero-based $M_{z}$-invariant subspace of $\clh_{k_i}$, $i=1,\ldots,n$, then,
\[
\mbox{mult}_{M_{\bm z}|_{ (\clq_1 \otimes \cdots \otimes \clq_n)^\perp}}
(\clq_1 \otimes \cdots \otimes \clq_n)^\perp = n.
\]
\end{cor}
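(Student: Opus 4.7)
The plan is to verify that Corollary \ref{cor-1} falls entirely within the hypotheses of Corollary \ref{cor-3} (and hence of Theorem \ref{thm-main1}), so the proof is really an exercise in checking four items on a checklist rather than a fresh argument. Specifically, for each $i = 1, \ldots, n$ I need: (i) $\clh_{k_i}$ is a regular reproducing kernel Hilbert space, (ii) $\mbox{mult}_{M_z}(\clh_{k_i}) = 1$, (iii) $\clq_i^{\bot}$ is zero-based and satisfies $\dim(\clq_i^\perp \ominus z\clq_i^\perp) < \infty$, and (iv) $M_z|_{\clq_i^\perp}$ enjoys the generating wandering subspace property, since (iv) is implicitly needed in the chain leading to Corollary \ref{cor-3} via Theorem \ref{thm-main1}.

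First I would point out that both the Hardy space $H^2(\D)$ and the Dirichlet space $\mathcal{D}(\D)$ are regular reproducing kernel Hilbert spaces: their reproducing kernels (the Szeg\H{o} kernel and the Dirichlet kernel respectively) are analytic in the first variable and the eigenspaces of $M_z^*$ at a point $\lambda \in \D$ are spanned by $k(\cdot,\lambda)$. Moreover, the constant function $\mathbf{1}$ is a cyclic vector for $M_z$ in each case, so $\mbox{mult}_{M_z}(\clh_{k_i}) = 1$. Since $\clq_i^{\bot}$ is assumed to be a zero-based $M_z$-invariant subspace, the preceding Proposition ensures that $M_z^*|_{\clq_i}$ has non-empty point spectrum, so the spectral hypothesis of Theorem \ref{thm-main1} is met.

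Next I would invoke the generating wandering subspace property. For $H^2(\D)$ this is just Beurling's theorem: every nonzero $M_z$-invariant subspace has the form $\theta H^2(\D)$ for an inner function $\theta$, hence $\clq_i^\perp \ominus z\clq_i^\perp = \mathbb{C}\theta$, is one-dimensional, and generates $\clq_i^\perp$ under iteration of $M_z$. For the Dirichlet space, the analogous wandering subspace theorem and the one-dimensionality of $\cls \ominus z\cls$ are supplied by the results cited as \cite{AB} and \cite{SR} (Richter's theorem in the Dirichlet setting). In both cases we conclude $\mbox{mult}_{M_z|_{\clq_i^\perp}}(\clq_i^\perp) = \dim(\clq_i^\perp \ominus z\clq_i^\perp) = 1$.

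With all hypotheses verified, I would finish by applying Corollary \ref{cor-3}, using also that $\h = \clh_{k_1} \otimes \cdots \otimes \clh_{k_n}$ realizes the Hardy (resp. Dirichlet) space over $\D^n$ (cf.\ \cite{T}) and that the $M_{z_i}$ on the polydisc space corresponds to $\tilde{M}_{z_i}$ under this identification. Summing yields $\mbox{mult}_{M_{\bm z}|_{(\clq_1 \otimes \cdots \otimes \clq_n)^\perp}}(\clq_1 \otimes \cdots \otimes \clq_n)^\perp = \sum_{i=1}^n 1 = n$. There is no real obstacle here; the only mild subtlety is ensuring that the Dirichlet-space wandering subspace theorem indeed covers \emph{every} nonzero $M_z$-invariant subspace (not only those generated by multipliers), which is exactly the content of Richter's theorem as recorded in \cite{SR}.
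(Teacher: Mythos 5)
Your proof is correct and follows essentially the same route as the paper: verify that the Hardy and Dirichlet spaces are regular RKHS with $\mbox{mult}_{M_z} = 1$, invoke Beurling \cite{AB} and Richter \cite{SR} for the generating wandering subspace property with $\dim(\clq_i^\perp \ominus z\clq_i^\perp) = 1$, use the zero-based hypothesis to get non-empty point spectrum for $M_z^*|_{\clq_i}$, and then apply Theorem \ref{thm-main1} (equivalently Corollary \ref{cor-3}). The paper's proof is terser but identical in substance.
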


A similar argument and the generating wandering subspace property of
shift invariant subspaces of the Bergman space \cite{ARS} yields the
following:

\begin{cor}\label{cor-2}
Let $\clh_{k_i}$, $i = 1, \ldots, n$, be the Dirichlet space, the
Bergman space or the Hardy space over $\mathbb{D}$. Let $\clq_i$, $i
= 1, \ldots, n$, be proper closed shift co-invariant subspaces of
$\clh_{k_i}$. If $\clq_i^{\bot}$ is a zero based $M_{z}$-invariant subspace of $\clh_{k_i}$ and
\[
\mbox{dim} (\clq_i^\perp \ominus z \clq_i^\perp) < \infty,
\]
for all $i = 1, \ldots, n$, then
\[
\mbox{mult}_{M_{\bm z}|_{ (\clq_1 \otimes \cdots \otimes \clq_n)^\perp}}
(\clq_1 \otimes \cdots \otimes \clq_n)^\perp = \sum_{i=1}^n
(\mbox{mult}_{M_z|_{\mathcal{Q}_i^\perp}} (\mathcal{Q}_i^{\bot})) = \sum_{i=1}^n
\mbox{dim} (\clq_i^\perp \ominus z \clq_i^\perp).
\]
\end{cor}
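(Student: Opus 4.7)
The plan is to deduce Corollary \ref{cor-2} directly from Corollary \ref{cor-3} (and hence from Theorem \ref{thm-main1}), by verifying, for each of the three families of spaces under consideration, that the necessary hypotheses hold. The Hardy and Dirichlet cases are already handled in Corollary \ref{cor-1}, so the essentially new content here is the Bergman case, but the argument can be written uniformly.

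First I would verify that the Hardy, Dirichlet, and Bergman spaces over $\D$ are regular reproducing kernel Hilbert spaces in the sense introduced at the start of this section, with $\mbox{mult}_{M_z}(\clh_{k_i}) = 1$. In each case the reproducing kernel is classical and analytic in $z$ for fixed $w$, the constant function $1$ is cyclic for $M_z$, and a standard computation gives $\ker(M_z^* - \lambda I_{\clh_{k_i}}) = \C\, k_i(\cdot, \lambda)$ for each $\lambda \in \D$. Since $\clq_i^\perp$ is assumed to be zero-based, the Proposition immediately preceding Corollary \ref{cor-3} yields that $\sigma_p(M_z^*|_{\clq_i}) \neq \emptyset$, which supplies the point spectrum hypothesis of Theorem \ref{thm-main1}.

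Next I would invoke the generating wandering subspace property of $M_z|_{\clq_i^\perp}$: for the Hardy and Dirichlet spaces this is due to Beurling and Richter respectively (cited in the paper as \cite{AB, SR}), while for the Bergman space it is the theorem of Aleman, Richter and Sundberg \cite{ARS}. Combined with the hypothesis $\dim(\clq_i^\perp \ominus z\clq_i^\perp) < \infty$, Proposition \ref{prop-wwrank} applied to the single operator $M_z|_{\clq_i^\perp}$ on its invariant subspace $\clq_i^\perp$ gives
\[
\mbox{mult}_{M_z|_{\clq_i^\perp}}(\clq_i^\perp) = \dim(\clq_i^\perp \ominus z\clq_i^\perp).
\]
All hypotheses of Theorem \ref{thm-main1} are now verified (the identification of $\h$ with the appropriate function Hilbert space over $\D^n$ being the content of \cite{T}), and the two equalities asserted in Corollary \ref{cor-2} follow immediately.

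I expect no serious obstacle here: the deep analytic content, namely the wandering subspace theorems in the three contexts, is imported as a black box, and the rest is bookkeeping via the structural results of Sections \ref{sec-prel}--\ref{sec-lowerbound} and Theorem \ref{thm-main1}. A subtle point worth noting is that the finite-dimensionality hypothesis on $\clq_i^\perp \ominus z\clq_i^\perp$ is needed only to ensure the right-hand side of the additive formula is finite (and to allow a clean application of Proposition \ref{prop-wwrank}); the generating wandering subspace property itself holds for \emph{every} $M_z$-invariant subspace of the Bergman space, with no dimension restriction.
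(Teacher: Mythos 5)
Your proposal is correct and follows essentially the same route as the paper: the paper's own proof of this corollary is a one-line appeal to the wandering subspace theorems of Beurling, Richter, and Aleman--Richter--Sundberg together with Theorem~\ref{thm-main1} (exactly as in the preceding Corollary~\ref{cor-1}, plus \cite{ARS} for the Bergman case), and you verify the same hypotheses in the same way. The one small improvement in your write-up is that you explicitly check the generating wandering subspace property when invoking Corollary~\ref{cor-3}, a hypothesis that Corollary~\ref{cor-3} as stated silently inherits from Theorem~\ref{thm-main1} but does not list; spelling this out is the safer phrasing.
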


Note that the generating wandering subspace assumption in Corollary
\ref{cor-2} ensures that (see Proposition \ref{prop-wwrank})
\[
\mbox{mult}_{M_z|_{\clq_i^\perp}}(\clq_i^\perp) < \infty,
\]
for all $i = 1, \ldots, n$. At present it is not very clear whether
the generating wandering subspace assumption can be replaced by
finite multiplicity property. Our methods rely heavily on the
assumption that the invariant subspaces are zero-based and satisfies the generating wandering subspace property.

\vspace{4mm}

\noindent\textbf{Acknowledgement:} The second author is supported in
part by NBHM (National Board of Higher Mathematics, India) grant
NBHM/R.P.64/2014, and the Mathematical Research Impact Centric
Support (MATRICS) grant, File No : MTR/2017/000522, by the Science
and Engineering Research Board (SERB), Department of Science \&
Technology (DST), Government of India. The third author is supported by the
Department of Atomic Energy (DAE) through the NBHM Postdoctoral fellowship and
acknowledges Indian Statistical Institute, Bangalore, for warm hospitality.


\begin{thebibliography}{99}

\bibitem{ARS}
A. Aleman, S. Richter and C. Sundberg, {\em Beurling's theorem for
the Bergman space}, Acta Math. 177 (1996), 275-310.

\bibitem{ABFP}
C. Apostol, H. Bercovici, C. Foias and C. Pearcy, {\em Invariant subspaces, dilation theory, and the structure of the predual of a dual algebra I}, J. Funct. Anal. 63 (1985), 369–404.


\bibitem{AB}
A. Beurling, {\em On two problems concerning linear transformations
in Hilbert space}, Acta Math. 81 (1949), 239-255.


\bibitem{CDS1}
A. Chattopadhyay, B.K. Das and J. Sarkar, {\em Rank of a co-doubly
commuting submodule is 2}, Proceedings of American Math Society, 146
(2018), 1181-1187.


\bibitem{CDS3}
A. Chattopadhyay, B.K. Das and J. Sarkar, {\em Star-generating
vectors of Rudin's quotient modules}, J. Funct. Anal. 267 (2014),
4341-4360.



\bibitem{DY}
R. Douglas and R. Yang, {\em Operator theory in the Hardy space over the
bidisk (I)}, Integral Equations Operator Theory 38 (2000), no. 2,
207-221.


\bibitem{XF}
X. Fang, {\em Additive invariants on the Hardy space over the
polydisc}, J. Funct. Anal. 253 (2007), 359-372.

\bibitem{H1}
H. Hedenmalm, {An invariant subspace of the Bergman space having the codimension two property}, J. reine angew. Math. 443 (1993), 1–9.

\bibitem{H2}
H. Hedenmalm, B. Korenblum and K. Zhu, {\em Theory of Bergman spaces}, Grad. Texts Math. 199, Springer-Verlag, New York 2000.

\bibitem{H3}
H. Hedenmalm, S. Richter and K. Seip, {\em Interpolating sequences and invariant subspaces of given index in the Bergman spaces}, J. reine angew. Math. 477 (1996), 13–30.


\bibitem{III} 
K. J. Izuchi, K. H. Izuchi and Y. Izuchi, {\em Blaschke
products and the rank of backward shift invariant subspaces over the
bidisk}, J. Funct. Anal. 261 (2011), no. 6, 1457-1468.

\bibitem{I31}
K. J. Izuchi, K. H. Izuchi and Y. Izuchi, {\em Ranks of invariant
subspaces of the Hardy space over the bidisk}, J. Reine Angew. Math.
659 (2011) 101-139.

\bibitem{I32}
K. J. Izuchi, K. H. Izuchi and Y. Izuchi, {\em Ranks of backward
shift invariant subspaces of the Hardy space over the bidisk}, Math.
Z. 274 (2013), 885-903.


\bibitem{P1}
G. Popescu, {\em Invariant subspaces and operator model theory on
noncommutative varieties}, Math. Ann. 372 (2018), 611-650.


\bibitem{P2}
G. Popescu, {\em Euler characteristic on noncommutative polyballs},
J. Reine Angew. Math. 728 (2017), 195-236.

\bibitem{SR}
S. Richter, {\em Invariant subspaces of the Dirichlet shift}, J.
Reine Angew. Math. 386 (1988), 205-220.


\bibitem{R}
W. Rudin, {\em Function Theory in Polydiscs}, Benjamin, New York
1969.


\bibitem{sarkar}
J. Sarkar, {\em Jordan blocks of $H^2(\mathbb{D}^n)$}, J. Operator
theory 72 (2014), 371-385. .



\bibitem{T}
A. Tomerlin, {\em Products of Nevanlinna-Pick kernels and operator
colligations}, Integral Equations Operator Theory 38 (2000), no. 3,
350-356.




\end{thebibliography}
\end{document}